\newtheorem{theorem}{Theorem} 
\newtheorem{corollary}{Corollary}[theorem]
\newtheorem{lemma}[theorem]{Lemma}
\title{Blocked Gibbs Sampling for Improved Convergence in Finite Mixture Models}
\author{David Michael Swanson\footnote{ORCID: 0000-0003-3174-1656 \\
MSC2020 Subject Classifications. Primary 60B10; Secondary 65C05, 62H30.
}\\ \small{University of Texas MD Anderson Cancer Center} \\ \small{Department of Biostatistics, Houston TX}}
\date{October 2024}
\begin{document}

\maketitle

\abstract{Gibbs sampling is a common procedure used to fit finite mixture models.  However, it is known to be slow to converge when exploring correlated regions of a parameter space and so blocking correlated parameters is sometimes implemented in practice.  This is straightforward to visualize in contexts like low-dimensional multivariate Gaussian distributions, but more difficult for mixture models because of the way latent variable assignment and cluster-specific parameters influence one another.  Here we analyze correlation in the space of latent variables and show that latent variables of outlier observations equidistant between component distributions can exhibit significant correlation that is not bounded away from one, suggesting they can converge very slowly to their stationary distribution.  We provide bounds on convergence rates to a modification of the stationary distribution and propose a blocked sampling procedure that significantly reduces autocorrelation in the latent variable Markov chain, which we demonstrate in simulation.
\vspace{0.1cm}

\noindent
{\bf Keywords:} Gibbs sampling; finite mixture model; convergence rate; blocked sampling; 
}


\section{Introduction}
\label{intro}

Mixture models are ubiquitous in the field of statistics and are useful tools to model phenomena composed of diverse underlying data generating mechanisms.  They have been developed and well-studied over many years, and some understanding of convergence rates and consistency of mixing distributions and component parameters under both known and unknown numbers of components has been achieved \citep{miller_mixture_2018,richardson_bayesian_1997,miller_inconsistency_2014, robert_mixtures_1996,stephens_dealing_2000,fruhwirth-schnatter_finite_2006}.  Mixture models are often fit by augmentation of the data with latent variables  which specify component membership of an observation.  
Because one goal of fitting a mixture model can be deconvolution of the observed data into their underlying components, one likewise desires to understand their estimation and improve it if possible.  

Gibbs sampling is a common way to sample from the latent variable space in mixture models because of the convenience of its closed-form conditional distributions \citep{geman_stochastic_1984, bensmail_inference_1997, celeux_computational_2000,robert_monte_1998}.  A drawback of Gibbs sampling however is that it can be slow to converge when exploring correlated regions of a parameter space \citep{roberts_updating_1997, roberts_convergence_1998}.  Several authors have studied convergence rates of Gibbs sampling for this reason and in some cases tried to improve them \citep{diaconis_geometric_1991, liu_covariance_1995, roberts_updating_1997, hobert_improving_2011, amit_comparing_1991, roberts_simple_1994, sahu_convergence_1999}.  As a result, blocked Gibbs sampling approaches, or sampling from a joint conditional distribution rather than each parameter one at a time,
have been proposed for a variety of contexts including variance components models, Gaussian Markov random field models, and multivariate Gaussian distributions \citep{rosenthal_rates_1995, tan_block_2009, jensen_blocking_1995,jensen_blocking_1999,knorr-held_block_2002}.  In some cases, convergence rates are known and straightforward to compute, such as with the multivariate Gaussian distribution, and it has been shown that there is often benefit to using larger blocks during sampling \citep{roberts_convergence_1998, amit_comparing_1991}.  Intuitively, blocking improves convergence because it moves correlation from the Gibbs sampler to the random number generator \citep{seewald_discussion_1992}.  

Less often has blocking been proposed in the context of mixture models, and when it has it is from a non-parametric Bayesian perspective under different parameterizations \citep{ishwaran_gibbs_2001,ishwaran_bayesian_2001,argiento_blocked_2016}.  The blocked sampling is not performed directly using joint conditional distributions in the latent variable space, 
nor is the motivation the result of significant correlation among subsets of latent variables.  Unanswered questions therefore remain regarding correlation structure in the latent variable space, how that structure may influence convergence of the Markov chain generated under Gibbs sampling, and whether slow convergence rates can be calculated and then improved by blocked sampling.  





To attempt to  answer these questions, with caveats, we consider in this work the Gaussian finite mixture model with a known number of components within a Bayesian framework and show that latent variables exhibit correlation structure as a function of observations' proximities to component means and one another.  Because the presence of component parameters makes the study of the correlation of latent variables more difficult and clusterings are often of primary inferential interest, we integrate out mean and variance component parameters in calculating the joint distributions of latent variables needed to study the problem \citep{murphy_machine_2012,van_dyk_partially_2008,liu_collapsed_1994}. These calculations reveal that, in some cases, such as when observations are close to one another and deep in the tails between two or more component distributions, ie, outlier observations, that correlation can be considerable and is not bounded away from 1.  We relate this phenomenon to latent variables' stationary distributions, which suggests that a subset of latent variables can converge arbitrarily slowly, implying that standard Gibbs sampling-based estimation of their clustering will be poor.  In such cases, we propose blocking these correlated latent variables, or sampling from their joint distribution, conditional on the allocation of the complementary, non-outlier set of latent variables in the model so that convergence to the stationary distribution is not impeded by significant autocorrelation in the Markov chain.  In so doing, we introduce by necessity a modified notion of stationarity in this mixture model setting. 


The paper is arranged into nine sections.  In section 2, we describe the collapsed Bayesian Gaussian mixture model and then calculate expressions for the joint distribution of an arbitrary collection of latent variables conditional of the complementary set.  Next, we consider the two component mixture model and calculate correlation of two arbitrary latent variables as a function of cluster allocations and observations' proximities to one another in order to identify situations in which blocked Gibbs sampling is advantageous. In section 4, we introduce an alternative notion of convergence specific to subsets of latent variables then, in Section 5, consider more than two components in the mixture model and provide convergence rate bounds for the set of blocked, outlier observations.  
In sections 6 and 7, we address computational burden of the blocking strategy and then consider a wider variety of cluster spatial orientations that complicate correlation structures in the latent variable space.
In section 8, we show in simulation significantly reduced autocorrelation in the Markov chain and improved clustering estimation.  We conclude in section 9 with a discussion.  

\section{Collapsed Gaussian mixture model} 
\label{sec:conditional}

Suppose we want to cluster data ${\pmb Y} = (Y_1, \dots , Y_N)$ for $Y_i \in \mathbbm{R}^D$, which follow the mixture model 
$$L(Y_i|{\pmb \pi}, {\pmb \theta})=\sum_{k=1}^K \pi_k L_{\theta_k} (Y_i)$$
where ${\pmb \pi }= (\pi_1 \dots \pi_K) $ are weights summing to 1 and $\theta_k$ parameterizes component density $L_{\theta_k}$. By introducing a latent variable $C_i$ for every $Y_i$ that determines membership in one of the $K$ different component distributions, we can sample from ${\pmb C}=(C_1 , \dots ,C_N)$ given the data with  

$$L({\pmb Y }| {\pmb C}, {\pmb \theta} ) p({\pmb C} | {\pmb \pi}) =f({\pmb C},{\pmb Y} | {\pmb \pi}, {\pmb \theta}) \propto f({\pmb C}| {\pmb \pi}, {\pmb \theta}, {\pmb Y})$$
Considering ${\pmb \pi}$ and ${\pmb \theta}$ as random, one can introduce another level of hierarchy with $p({\pmb \pi} | \beta)$ and $p(\theta_k | \theta_0)$, $k \in \{1,\dots , K\}$, giving 


\begin{align*}
f({\pmb C},{\pmb Y }| {\pmb \pi}, {\pmb \theta} ) p({\pmb \pi} | \beta) p(\theta_1, \dots , \theta_K | \theta_0)  & = f({\pmb C},{\pmb Y},{\pmb \theta},{\pmb  \pi} | \beta, \theta_0)  \\ = \prod_{k=1}^K  f({\pmb C}_{\{k\}}, & {\pmb Y}_{\{k\}},  \theta_k ,{\pmb \pi} | \beta , \theta_0) \\ 
=\prod_{k=1}^K L( {\pmb Y}_{\{k\}} |  & {\pmb C}_{\{k\}},  \theta_k ) p(\theta_k | \theta_0 )  p( {\pmb C}_{\{k\}} | {\pmb \pi} ) p({\pmb \pi} | \beta ) 
\end{align*}
and where in the last equalities we partition the terms according to cluster allocations of ${\pmb C}$ with $\{k \}=\{j : C_j = k \} $ for ${\pmb C}_{\{ k \}}$.  
One can then integrate out ${\pmb \theta}$ and ${\pmb \pi}$ in order to consider correlation among the $C_i$ unobscured by these parameters.
We can leverage conjugacy by supposing that each $\theta_k = (\mu_k , \Sigma_k) \sim \mbox{GIW}(\mu_0,\kappa_0, \nu_0, \Sigma_0)$, the multivariate Gaussian Inverse Wishart distribution
\citep{murphy_machine_2012}.  If additionally ${\pmb \pi} \sim \mbox{Dirichlet}(\beta_1, \dots , \beta_K)$ with $\beta_k = \beta  \;\, \mbox{for all} \; k\in \{1,\dots , K \}$, we have  

\begin{align*}
 f({\pmb C} \, |\, \beta, \theta_0 , {\pmb Y} ) &  \propto \hspace{0.0cm} \\ 
 \hspace{1.6cm}  \Bigl(\prod_k &  \int_{\mu_k,\Sigma_k} L({\pmb Y}_{\{k\} }  | {\pmb C}_{\{ k \}}, {\bf \mu_k} , {\bf \Sigma_k}) p_\mu({\bf \mu_k}, {\bf \Sigma_k}  | \mu_0 , \Sigma_0 ) d\mu_k d\Sigma_k  \Bigr) \nonumber \\
&  \hspace{4cm} \cdot \Bigl( \prod_k \int_{\pi}  p( {\pmb C}_{\{k\}} |  {\pmb \pi} ) p({\pmb \pi} | \beta ) d\pi \Bigr) \nonumber \\
\end{align*}
\begin{align}
 =  &  \biggl( \prod_k \frac{\kappa_0^{D/2} |\Sigma_0|}{(\kappa_0 + n_k)^{D/2}|S_{\{k\}}|^{\nu_0/2 + n_k/2} }  G_k \biggr)  \cdot \biggl( \frac{\Gamma (K\beta)}{\Gamma(N+K\beta) \Gamma (\beta)^K} \prod_k \Gamma (\beta + n_k )   \biggr) \\
  &  \hspace{4.4cm} \propto \prod_k \frac{\Gamma (\beta + n_k )  \cdot \Pi_{i=1}^D \Gamma (\nu_0 + n_k+1-i) }{|S_{\{k\}}|^{\nu_0/2 + n_k/2} \; \cdot \; (\kappa_0 + n_k)^{D/2} } 
  \label{eqn:marg}
\end{align}
where we define $n_k = | \{ k \} | $,  $\theta_0 = (\mu_0,\kappa_0, \nu_0, \Sigma_0)$,
and $S_{\{k\}}$ and $G_k$ are

\begin{align}
S_{\{k\}} = \Sigma_0 + S_{{Y}_{\{k\}}} + \frac{\kappa_0 n_k}{\kappa_0 + n_k}(\overline{Y}_{\{ k\}}-m_0)(\overline{Y}_{\{ k\}}-m_0)^T
\label{eqn:sum_squares}
\end{align}

$$G_k = \prod_{i=1}^D \frac{\Gamma (\nu_0 + n_k +1-i)}{\Gamma (\nu_0 +1-i)}$$
where $S_{{Y}_{\{k\}}}$ is the centered sum of squares of ${\pmb Y}_{\{ k\}}$ 
and ${\overline Y_{\{k\}}}$ is the mean of ${\pmb Y}_{\{k\}}$.  When the argument of $| \cdot | $ is a set it denotes cardinality while if a matrix it is the determinant.  



%
Define $\{b \} = \{i_1,i_2,\dots , i_B\}$ for some $B<N$, a blocking set of indices, where ${\pmb C}_{\{ b \}}$ is the subset of ${\pmb C}$ with indices in $\{b\}$, and ${\pmb C}_{\backslash \{ b \}} =  {\pmb C} \, \backslash \, {\pmb C}_{\{b \}}$.  This index set is generic now but will ultimately represent the indices of observations for which blocking poses a convergence rate advantage under Gibbs sampling, in particular the indices of outlier observations between two or more components.  Also define $\{k \backslash b\}  = \{j : C_j = k\} \backslash \{b\}$, a set whose size we call $n_{k\backslash b}$. Then we can calculate  
\begin{align}
\begin{split}
  & f({\pmb C}_{\{b\}} \, |\, {\pmb C}_{\backslash \{ b \}} ,  \beta,  \, \theta_0, {\pmb Y}  ) =  \frac{f({\pmb C} | \beta, \theta_0, {\pmb Y}  )}{f( {\pmb C}_{\backslash \{ b \}} |  \beta, \theta_0, {\pmb Y} )}    \\ 
   & \hspace{-0.1cm} \propto \prod_k  \frac{\;\; \Gamma (\beta + n_k )   \;\; |S_{\{k\backslash b\}}|^{\nu_0/2 + n_{k\backslash b}/2} \;\; (\kappa_0 + n_{k\backslash b})^{D/2} \;\; \Pi_{i=1}^D  \Gamma (\nu_0 + n_{k}+1-i) }{\Gamma (\beta + n_{k\backslash b} ) \; |S_{\{k\}}|^{\nu_0/2 + n_k/2} \; (\kappa_0 + n_k)^{D/2} \;\;\;  \Pi_{i=1}^D   \Gamma (\nu_0 + n_{k\backslash b}+1-i) }  
\label{eqn:joint}
\end{split}
\end{align}
where $S_{\{k \backslash b\}}$ 
is defined analogously to $S_{\{k\}}$ but with the index set $\{k\backslash b \}$ and is the centered sum of squares of the $Y_{\{k \backslash b \}}$.
Notice that while subscripting random variables ${\pmb C}$ or ${\pmb Y}$ with a set like $\{k \}$ denotes the elements corresponding to that index set, in contrast subscripting $S$ with a set denotes a matrix.

In Equation (\ref{eqn:joint}), we see that if hyperparameters $\beta$, $\kappa_0$, and $\nu_0$ are positive,
it is only the fraction of determinants term $(|S_{\{k\backslash b\}}|^{\nu_0/2 + n_{k\backslash b}/2})$/$(|S_{\{k\}}|^{\nu_0/2 + n_{k}/2})$ that may not be bounded away from 0.  It is this feature that drives what we will find is the unbounded correlation of latent variables for observations in multiple tails of component distributions, conditional on the allocation of the complementary set of latent variables.  It is this term that will dominate in the circumstances of interest to us and so will be our focus going forward.


\section{Conditional Correlation in the latent variable space with K=2, B=2}
\label{sec:correlation}

Consider for simplicity the case with $K=2$ and $B=2$ so that the mixture model consists of two components and with a block size of two.   Suppose we want to calculate the correlation of $C_i$ and $C_j$, that is $\{b \}= \{i,j \} $, latent variables of the two observations $Y_i$ and $Y_j$, conditional on the allocation of the complementary set, denoted ${\pmb C}_{\backslash i,j}$. 


Because $B=2$ and there are $K^B$ cluster allocations for the blocking set, there are four possible assignments for ${\pmb C}_{\{ b \}}=\{C_i, C_j \}$.  We can denote these possibilities with probabilities $p_{11}$, $p_{12}$, $p_{21}$, and $p_{22}$, where for $l,m\in \{1,2 \}$, $p_{lm}$ 
is the probability $Y_i$ and $Y_j$ are allocated (equivalently, $C_i$ and $C_j$ are assigned) to components $l$ and $m$, respectively.  Since $S_{\{ k\backslash b \}}$ and $n_{k\backslash b}$ are constant in Equation (\ref{eqn:joint}) across $p_{11}$, $p_{12}$, $p_{21}$, and $p_{22}$, we can calculate the joint distribution with this expression and then normalize so the values sum to one:

\begin{align*}
f({\pmb C}_{\{i,j\}} \, | \, {\pmb C}_{\backslash \{ i,j \}} ,  \beta,  \, \theta_0, Y  ) \propto \prod_k  \frac{\;\; \Gamma (\beta + n_k )   \;\;  \Pi_{i=1}^D  \Gamma (\nu_0 + n_{k}+1-i) }{|S_{\{k\}}|^{\nu_0/2 + n_k/2} \; (\kappa_0 + n_k)^{D/2}  }
\end{align*}
where the expression is a function of ${\pmb C}_{\{ij\}}$ via $S_{\{k\}}$ and $n_k$, in the former case by way of the sum of squares calculation exhibited in Equation (\ref{eqn:sum_squares}).

 
 We introduce another piece of notation, $\{b_k\} = \{n : n\in \{ b\} \mbox{ and } C_n=k \} = \{b \} \cap \{ k \}$, indices of the blocking set allocated to component $k$, and which notice is specific to any point in the joint distribution of $C_i$ and $C_j$, that represented by $p_{lm}$ with $l,m\in \{1,2 \}$.
 For this reason, when generalizing to the $K\geq 2$ component mixture model in Section \ref{sec:converge_rate} below, $\{b_k\}$ is subscripted by $``lm"$, signifying the allocation of $Y_i$ and $Y_j$ to the $l$ and $m$ components, respectively (ie, $C_i=l$ and $C_j=m$, meaning $i\in \{b_l\}$ and $j\in \{b_m\}$).  

 
 

With these definitions, we assert the following decomposition of the sum of squares $S_{{Y}_{\{k\}}}$:
 

\begin{align*}
S_{{Y}_{\{k\}}}  = S_{{Y}_{\{k \backslash b \}}} +(n_k -  n_{k\backslash b})\frac{n_{k\backslash b}}{n_k} & (\overline{Y}_{\{ k\backslash b\}} \\
- \overline{Y}_{\{b_k\}})& ({\overline Y}_{\{ k\backslash b\}}  - \overline{Y}_{\{b_k\}})^T +V_{b_k}
\end{align*}
where 

\begin{align*}
V_{b_k}=({\pmb Y}_{\{b_k\}}-\overline{Y}_{\{b_k\}})({\pmb Y}_{\{b_k\}}-\overline{Y}_{\{b_k\}})'
\end{align*}
and where ${\pmb Y}_{\{b_k\}}$ is understood as the $D \times |\{b_k\}|$ matrix whose observations are oriented vertically, and from which the $D \times 1$ mean vector $\overline{Y}_{\{b_k\}}$ will be subtracted one by one column-wise.  So the sum of squares for component $k$ can be expressed as the sum of that without the blocking set and the scaled, squared difference between the means of observations allocated to component $k$ in and not in the blocking set, plus the $V_{b_k}$ term (the ``within'' sum of squares of the blocked observations allocated to component $k$). Note that for those $k$ for which $\{b_k\}$ is the empty set, $n_k - n_{k\backslash b}$ is 0 so that there is no perturbation to $S_{{Y}_{\{k \backslash b\}}}$.  
We decompose $S_{{Y}_{\{k\}}}$ this way for convenience to study the correlation of $C_i$ and $C_j$ as a function of increasing $d_k$ for $(d_k \, \overline{Y}_{\{ k\backslash b\}} -  \overline{Y}_{\{b_k\}})$ for $k\in \{1,2 \}$ when $K=2$, the mean distance between the blocked observations allocated to component $k$ and observations in component $k$ without the blocked observations. 


For analytic tractability in what follows, assume for the GIW prior that $\Sigma_0$ is positive definite and a flat prior on the mean, ie $\kappa_0=0$.  The latter assumption gives 
$$S_{\{ k\}} -S_{\{ k\backslash b \}}=S_{{Y}_{\{k\}}} -  S_{{Y}_{\{k \backslash b \}}}$$
where the left hand side is Equation (\ref{eqn:sum_squares}) perturbed with and without the blocking set of $Y_{\{b \}}$, and the right hand side is the perturbed centered sum of squares.  
However, since the size of the blocking set allocated to component $k$, $n_k - n_{k\backslash b}$, will tend to be small relative to $n_{k\backslash b }$, $S_{\{ k\}} -S_{\{ k\backslash b \}} \approx S_{{Y}_{\{k\}}} -  S_{{Y}_{\{k \backslash b \}}}$ in the general case.  
Examination also shows that when $d_k\rightarrow \infty$, the case of interest for us, the dominant term $\overline{Y}_{\{k \backslash b\}}^2$ from both of the perturbed terms in Equation (\ref{eqn:sum_squares}) yields $(S_{{Y}_{\{k\}}} - S_{{Y}_{\{k \backslash b \}}} )/(  S_{\{ k\}} -S_{\{ k\backslash b \}}) \rightarrow 1$ for $n_{k\backslash b} \rightarrow \infty$ while $n_k- n_{k\backslash b}  = {\mathcal O}(1)$.

Assume for convenience and without loss of generality that the data ${\pmb Y}$ are translated so that the mean of $Y_i$ and $Y_j$
is $\overline{Y}_{\{ ij \}}=0$.  Examining correlation by scaling ${\overline {\pmb Y}}_{\{ k\backslash b \}}$ with $d_k$, $k\in \{1,2 \}$, maintains the orientation of the two sets of component means while increasing the distance between them. A fixed distance is then maintained between $Y_i$ and $Y_j$ while effectively pushing them increasingly into the component tails because the component means, $\overline{\pmb {Y}}_{\{ 1\backslash b \}}$ and ${\overline{\pmb Y}}_{\{ 2\backslash b \}}$, are moving away from $Y_i$ and $Y_j$. 
  
Call $p_{1\cdot}$ the marginal probability that $C_i$ is allocated to component 1, call $p_{\cdot 1}$ the marginal probability that $C_j$ is allocated to component 1, and define $p_{2 \cdot}$ and $p_{\cdot 2}$ analogously for component 2, so $p_{l\cdot} = p_{l1} + p_{l2}$ and $p_{\cdot m } = p_{1m} + p_{2m}$.  We can then calculate the joint distribution of $C_i$ and $C_j$ in the two component mixture model with





\begin{align*}
p_{11} = \Gamma_{11} \, \Bigl| S_{{\{1 \backslash b \}}} + 2\, \Bigl(\frac{n_{1\backslash b}}{n_{1\backslash b}+2}\Bigl) (d_1 \, \overline{Y}_{\{ 1\backslash b\}})(d_1 \, \overline{Y}_{\{ 1 \backslash b\}} &  )^T  + V_{\{ ij \}} \Bigr|^{-(n_{1\backslash b}+2)/2} \\ 
& \cdot \Bigl| S_{{\{2 \backslash b \}}} \Bigr|^{-({n_{2\backslash b}})/2}
\end{align*}

\begin{align*}
p_{12}  = \Gamma_{12} \, \Bigl| S_{{\{1 \backslash b \}}} + & \Bigl(\frac{n_{1\backslash b}}{n_{1\backslash b}+1}  \Bigl) ( d_1\, \overline{Y}_{\{ 1\backslash b\}} - {Y}_{i})( d_1\, \overline{Y}_{\{ 1\backslash b\}} -  {Y}_{i})^T  \Bigr|^{-(n_{1\backslash b}+1)/2} \\ 
 \cdot  \Bigl| S_{{\{2 \backslash b \}}}& +  \Bigl(\frac{n_{2\backslash b}}{n_{2\backslash b}+1}\Bigl) (d_2 \, \overline{Y}_{\{ 2\backslash b\}} -  {Y}_{j})(d_2\, \overline{Y}_{\{ 2\backslash b\}} -    {Y}_{j})^T  \Bigr|^{-(n_{2\backslash b}+1)/2}
\end{align*}


\begin{align*}
p_{21} =  \Gamma_{21} \, \Bigl| S_{{\{1 \backslash b \}}} + & \Bigl(\frac{n_{1\backslash b}}{n_{1\backslash b}+1}  \Bigl) ( d_1\, \overline{Y}_{\{ 1\backslash b\}} - {Y}_{j})( d_1\, \overline{Y}_{\{ 1\backslash b\}} -  {Y}_{j})^T  \Bigr|^{-(n_{1\backslash b}+1)/2} \\ 
 \cdot  \Bigl| S_{{\{2 \backslash b \}}}& +  \Bigl(\frac{n_{2\backslash b}}{n_{2\backslash b}+1}\Bigl) (d_2 \, \overline{Y}_{\{ 2\backslash b\}} -  {Y}_{i})(d_2\, \overline{Y}_{\{ 2\backslash b\}} -    {Y}_{i})^T  \Bigr|^{-(n_{2\backslash b}+1)/2}
\end{align*}

\begin{align*}
p_{22} =  \Gamma_{22} \, \Bigl| S_{{\{1 \backslash b \}}}  \Bigr|^{-({n_{1\backslash b}})/2} & \\
 \cdot  \Bigl| S_{{\{2 \backslash b \}}} + 2\,   \Bigl(\frac{n_{2\backslash b}}{n_{2\backslash b}+2} &  \Bigl)    (d_2 \, \overline{Y}_{\{ 2\backslash b\}})(d_2 \, \overline{Y}_{\{ 2 \backslash b\}}   )^T  + V_{\{ ij \}} \Bigr|^{-(n_{2\backslash b}+2)/2} 
\end{align*}
where we define the constant $\Gamma_{lm}$ with
%
%
\begin{align}
\Gamma_{lm} = \gamma \prod_{k=1}^2 \Bigl[ \Gamma (\beta + & n_{k\backslash b} +I_{k=l} +   I_{k=m} ) \cdot  \nonumber \\ 
& \frac{\Pi_{i=1}^D    \Gamma (\nu_0 + n_{k\backslash b} +I_{k=l} + I_{k=m}+1-i) }{  (\kappa_0 +  n_{k\backslash b} +I_{k=l} + I_{k=m}  )^{D/2} }  \biggr]
\label{eqn:constant}
\end{align}
and where recall that $p_{lm}$ denotes $C_i$'s allocation to component $l$ and $C_j$'s allocation
to component $m$.  
%
%
%
%
Above, we use $\overline{Y}_{\{ ij \}}=0$ by assumption and $V_{\{ ij \}}$ is the centered sum of squares of $Y_i$ and $Y_j$, and $\gamma$ in Equation (\ref{eqn:constant}) is a normalizing constant so that $p_{11}+p_{12}+p_{21}+p_{22}=1$, whose form we omit for brevity.  Then we have $Cor(C_i,C_j) = \bigl(p_{11}-(p_{1\cdot} )(p_{\cdot 1} )\bigr) / \sqrt{(p_{1\cdot}\cdot p_{2\cdot})(p_{\cdot 1 }\cdot p_{\cdot 2})}$.  There is no $V_{\{ ij \}}$ term in $p_{12}$ and $p_{21}$ because each component consists of only 1 additional observation, $Y_i$ or $Y_j$, so the within component sum of squares is 0. 


\begin{theorem}
Consider the joint distribution of $C_i$ and $C_j$ given the complementary allocation ${\pmb C}_{\backslash i,j}$ defined above with probabilities $p_{11},\, p_{12},\, p_{21},\, p_{22}$.  Provided $S_{{\{k \backslash b \}}}$ are non-singular, $n_{k\backslash b}>0$, $k=1,2$, then as $d_k \rightarrow \infty$ with $d_1 = {\mathcal O}(d_2)$, $Cor(C_i,C_j)\rightarrow 1$. 
\label{theorem_cor}
\end{theorem}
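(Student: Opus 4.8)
The plan is to reduce everything to the four unnormalised weights obtained from the displays for $p_{11},p_{12},p_{21},p_{22}$ by setting the normaliser $\gamma$ to $1$ (write them $\tilde p_{lm}$), to determine the exact polynomial rate at which each $\tilde p_{lm}$ decays as $d_1,d_2\to\infty$, and then to feed those rates into the stated formula $Cor(C_i,C_j)=(p_{11}-p_{1\cdot}p_{\cdot 1})/\sqrt{p_{1\cdot}p_{2\cdot}p_{\cdot 1}p_{\cdot 2}}$. I read the hypothesis ``$d_1=\mathcal O(d_2)$'' together with $d_k\to\infty$ as meaning $d_1\asymp d_2\asymp d\to\infty$, the regime in which all four weights are of comparable polynomial type, and I take $\overline Y_{\{k\backslash b\}}\neq 0$ for $k=1,2$, which is implicit in the set-up since otherwise scaling a component mean by $d_k$ does nothing.

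The first step is to nail down the asymptotics of the weights. Each $\tilde p_{lm}$ is a positive constant (the product in Equation (\ref{eqn:constant}) with $\gamma=1$, with arguments $n_{k\backslash b}+I_{k=l}+I_{k=m}$ independent of $d$) times two determinant factors, and every such factor has the form $|A+t\,vv^T|$ with $t>0$, with $A$ equal to $S_{\{k\backslash b\}}$ or $S_{\{k\backslash b\}}+V_{\{ij\}}$ --- positive definite by the non-singularity hypothesis, since a non-singular positive-semidefinite matrix is positive definite and adding the positive semidefinite $V_{\{ij\}}$ preserves this --- and with $v$ equal to $d_1\overline Y_{\{1\backslash b\}}$, $d_2\overline Y_{\{2\backslash b\}}$, or one of these minus the fixed vector $Y_i$ or $Y_j$. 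The matrix determinant lemma $|A+t\,vv^T|=|A|(1+t\,v^T A^{-1}v)$ then makes such a factor $\Theta(d^2)$, because $A^{-1}$ is positive definite and $\overline Y_{\{k\backslash b\}}\neq0$ (the additive fixed shift by $Y_i$ or $Y_j$ only perturbs lower-order terms). Tracking the two factors and their exponents in each weight gives $\tilde p_{11}=\Theta(d^{-(n_{1\backslash b}+2)})$, $\tilde p_{22}=\Theta(d^{-(n_{2\backslash b}+2)})$, and $\tilde p_{12}=\tilde p_{21}=\Theta(d^{-(n_{1\backslash b}+n_{2\backslash b}+2)})$.

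The second step normalises and substitutes. Relabelling the two components swaps $(d_1,n_{1\backslash b})\leftrightarrow(d_2,n_{2\backslash b})$ and leaves $Cor(C_i,C_j)$ and the hypotheses unchanged, so assume $n_{1\backslash b}\le n_{2\backslash b}$ and set $m=n_{2\backslash b}-n_{1\backslash b}\ge0$. Since $\tilde p_{11}$ is then of (tied) largest order, $\sum_{l,m}\tilde p_{lm}=\Theta(\tilde p_{11})$, so after normalising $p_{11}=\Theta(1)$ --- in particular $p_{11}$, and hence also $1-p_{22}\ge p_{11}$, is bounded away from $0$ --- while $p_{22}=\Theta(d^{-m})$ and $p_{12}=p_{21}=\Theta(d^{-n_{2\backslash b}})$. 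The single inequality that carries the proof is $n_{2\backslash b}\ge m+1$, which is exactly the hypothesis $n_{1\backslash b}>0$; it yields $p_{12},p_{21}=O(d^{-(m+1)})=o(p_{22})$, so the split-allocation mass is negligible against $p_{22}$ (and all the more against $p_{11}$). Now use the identity $p_{11}-p_{1\cdot}p_{\cdot 1}=p_{22}-p_{2\cdot}p_{\cdot 2}$ (immediate from $\mathbbm{1}\{C=2\}=1-\mathbbm{1}\{C=1\}$): because $p_{2\cdot}=p_{22}+p_{21}=p_{22}(1+o(1))$, $p_{\cdot 2}=p_{22}(1+o(1))$, and $p_{1\cdot}=1-p_{2\cdot}=(1-p_{22})(1+o(1))$, $p_{\cdot 1}=(1-p_{22})(1+o(1))$, both the numerator and $\sqrt{p_{1\cdot}p_{2\cdot}p_{\cdot 1}p_{\cdot 2}}$ equal $p_{22}(1-p_{22})(1+o(1))$, whence $Cor(C_i,C_j)\to1$. (When $m=0$ one reads these as $p_{22}(1-p_{22})+o(1)$ with $p_{22}(1-p_{22})$ bounded away from $0$; the conclusion is the same.)

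I expect the real obstacle to be the unbalanced case $n_{1\backslash b}<n_{2\backslash b}$: there $p_{11}\to1$ and every other weight tends to $0$, so the correlation is a genuine $0/0$ limit in which the numerator $p_{22}-p_{2\cdot}p_{\cdot 2}$ and the denominator both vanish, and a careless estimate sees only that. What rescues it is precisely the order comparison $p_{12},p_{21}=o(p_{22})$ --- i.e.\ the hypothesis $n_{k\backslash b}>0$ --- which forces numerator and denominator to be asymptotic to the common quantity $p_{22}(1-p_{22})$. Keeping the three distinct decay rates bookkept correctly, and checking that the additive $Y_i,Y_j$ shifts in the determinant factors really do not alter leading exponents, are the points requiring care; and if the rates $d_1,d_2$ were allowed to diverge incomparably, or if some $\overline Y_{\{k\backslash b\}}$ vanished, the comparison $p_{12},p_{21}=o(p_{22})$ can fail and the correlation can tend to $0$ instead of $1$, which is why those two side-conditions are needed.
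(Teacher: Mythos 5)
Your proposal is correct and follows essentially the same route as the paper's proof: a rank-one determinant (matrix determinant lemma) expansion to extract the polynomial decay rates $\tilde p_{11}=\Theta(d^{-(n_{1\backslash b}+2)})$, $\tilde p_{22}=\Theta(d^{-(n_{2\backslash b}+2)})$, $\tilde p_{12}=\tilde p_{21}=\Theta(d^{-(n_{1\backslash b}+n_{2\backslash b}+2)})$, followed by the observation that the off-diagonal masses are of strictly lower order so that numerator and denominator of the correlation share the same dominant term (your $p_{22}(1-p_{22})$ bookkeeping is just the identity $p_{11}-p_{1\cdot}p_{\cdot 1}=p_{11}p_{22}-p_{12}p_{21}$ used in the paper). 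Your explicit isolation of the inequality $n_{2\backslash b}\geq m+1\Leftrightarrow n_{1\backslash b}>0$, and of the side conditions $\overline Y_{\{k\backslash b\}}\neq 0$ and $d_1\asymp d_2$ (the paper's $d_1={\mathcal O}(d_2)$ read as comparable rates), is if anything slightly more careful than the published argument in the unbalanced case $n_{1\backslash b}<n_{2\backslash b}$.
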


All proofs omitted in the main text can be found in the Supplementary Material.  Briefly for Theorem \ref{theorem_cor}, one shows that $p_{21}$ and $p_{12}$ are both $o(p_{11})$ and $o(p_{22})$ as the $d_k$'s become large.  And while it may be $p_{22}= o(p_{11})$ or $p_{11}= o(p_{22})$ depending on if $n_{1\backslash b} < n_{2\backslash b}$ or $n_{1\backslash b}> n_{2\backslash b}$, respectively, one still has $Cor(C_i,C_j)\rightarrow 1$.  That $p_{11} = {\mathcal O}(p_{22})$ or $p_{11} = o(p_{22})$ holds
reflects how increasingly well-leveraged outliers, $Y_i$ and $Y_j$, ``pull'' component means, ${\overline {\pmb Y}}_{\{ 1\backslash b \}}$ and ${\overline {\pmb Y}}_{\{ 2\backslash b \}}$, towards themselves synergistically when being allocated into that component, the influence of which becomes a function of the relative sizes of $n_{1\backslash b}$ and $n_{2\backslash b}$.  For clusters with smaller sample sizes, that ``pulling'' is increasingly effective as the distance between outliers and component means increases.  Thus, upon allocation into the component, the component's tail density at which the outlier is evaluated is significantly greater than it is with clusters of larger sample sizes.  This is all with respect to the diagonal components in the cross tabulation of $p_{11}$, $p_{12}$, $p_{21}$, and $p_{22}$, one can envision for the joint distribution of $C_i$ and $C_j$--those table elements representative of $Y_i$ and $Y_j$ being allocated to a cluster together. However, because the off-diagonal elements, representative of $Y_i$ and $Y_j$ being allocated to different clusters, become small still faster relative to the diagonal in the table above, correlation of $C_i$ and $C_j$ still goes to 1 even if one component along the diagonal dominates in the limit.    




\begin{corollary}
For $d_{1}$ fixed while $d_{2}\rightarrow \infty$, then $Cor(C_i,C_j)\rightarrow 0$.  
\label{corollary_cor}
\end{corollary}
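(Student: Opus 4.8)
The plan is to prove Corollary~\ref{corollary_cor} by tracking, as $d_2\to\infty$ with $d_1$ held fixed, the exact polynomial order in $d_2$ of each of the four weights $p_{11},p_{12},p_{21},p_{22}$, and then showing that the covariance in the numerator of $Cor(C_i,C_j)$ vanishes strictly faster than the product of marginal standard deviations in the denominator. A convenient first step is to pass to unnormalised weights: write $q_{lm}$ for the displayed expression for $p_{lm}$ with the normalising constant $\gamma$ inside $\Gamma_{lm}$ stripped out, so that $p_{lm}=q_{lm}/Q$ with $Q=q_{11}+q_{12}+q_{21}+q_{22}$. Using the elementary identity $p_{11}-p_{1\cdot}p_{\cdot1}=p_{11}p_{22}-p_{12}p_{21}$ and noting that $p_{11}p_{22}-p_{12}p_{21}$ and $p_{1\cdot}p_{2\cdot}p_{\cdot1}p_{\cdot2}$ carry common powers of $Q$ that cancel in the ratio, one gets
\[
Cor(C_i,C_j)=\frac{q_{11}q_{22}-q_{12}q_{21}}{\sqrt{(q_{11}+q_{12})(q_{11}+q_{21})(q_{21}+q_{22})(q_{12}+q_{22})}},
\]
which is homogeneous of degree zero in the $q_{lm}$, so we may work directly with the closed forms above viewed as functions of $d_2$ alone.

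Next I would read off the $d_2$-order of each $q_{lm}$ with the matrix determinant lemma $\det(A+t\,vv^T)=\det(A)\,(1+t\,v^TA^{-1}v)$, valid for symmetric positive-definite $A$ and any vector $v$. Under the hypotheses carried over from Theorem~\ref{theorem_cor}, each $S_{\{k\backslash b\}}$ is positive definite ($\Sigma_0$ plus a centered sum of squares, $\Sigma_0$ positive definite, $S_{\{k\backslash b\}}$ non-singular) and $V_{\{ij\}}$ is positive semidefinite; assuming additionally $\overline{Y}_{\{2\backslash b\}}\neq0$, so that increasing $d_2$ genuinely displaces the component-$2$ mean, the relevant quadratic forms $\overline{Y}_{\{2\backslash b\}}^TA^{-1}\overline{Y}_{\{2\backslash b\}}$ are strictly positive. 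Then: $q_{11}$ is a strictly positive constant in $d_2$ (component $2$ enters only through $|S_{\{2\backslash b\}}|$ and component $1$ only through the fixed-$d_1$ term); $q_{22}=\Theta\bigl(d_2^{-(n_{2\backslash b}+2)}\bigr)$, since its component-$2$ determinant is affine in $d_2^2$, hence of order $d_2^2$, and is raised to the power $-(n_{2\backslash b}+2)/2$ while the component-$1$ factor is a fixed positive constant; and $q_{12}=\Theta\bigl(d_2^{-(n_{2\backslash b}+1)}\bigr)=\Theta(q_{21})$, since the rank-one perturbation vector ($d_2\overline{Y}_{\{2\backslash b\}}$ shifted by the fixed $Y_j$ or $Y_i$) has squared norm of order $d_2^2$, making that determinant of order $d_2^2$, now raised to $-(n_{2\backslash b}+1)/2$, times a fixed component-$1$ factor.

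Substituting these orders finishes the argument. Because $n_{2\backslash b}\ge1$ we have $2(n_{2\backslash b}+1)\ge n_{2\backslash b}+3>n_{2\backslash b}+2$, so $q_{12}q_{21}=\Theta\bigl(d_2^{-2(n_{2\backslash b}+1)}\bigr)$ is of strictly smaller order than $q_{11}q_{22}=\Theta\bigl(d_2^{-(n_{2\backslash b}+2)}\bigr)$; hence the numerator $q_{11}q_{22}-q_{12}q_{21}=\Theta\bigl(d_2^{-(n_{2\backslash b}+2)}\bigr)$, with no leading-order cancellation. In the denominator, $q_{11}+q_{12}$ and $q_{11}+q_{21}$ are each $\Theta(1)$, while $q_{21}+q_{22}$ and $q_{12}+q_{22}$ are each $\Theta\bigl(d_2^{-(n_{2\backslash b}+1)}\bigr)$ (the off-diagonal terms dominating $q_{22}$), so the denominator is $\Theta\bigl(d_2^{-(n_{2\backslash b}+1)}\bigr)$. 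Dividing, $Cor(C_i,C_j)=\Theta(d_2^{-1})\to0$, which proves the claim and in fact exhibits the rate of decay.

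The main obstacle is making the last two steps quantitatively tight rather than merely heuristic: since the denominator itself tends to $0$, this is a genuine $0/0$ limit, so it does not suffice to note that the covariance vanishes — one must exhibit matching two-sided polynomial bounds in $d_2$ for numerator and denominator and verify that the numerator's exponent strictly exceeds the denominator's. The only way this could fail is accidental leading-order cancellation in $q_{11}q_{22}-q_{12}q_{21}$ or in the sums of $q_{lm}$'s appearing in the denominator; it does not, precisely because in each case the $d_2$-orders of the quantities being added or subtracted are different, which is what the exponent comparison $2(n_{2\backslash b}+1)>n_{2\backslash b}+2$ (valid for $n_{2\backslash b}\ge1$) guarantees. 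A secondary point worth recording is the degenerate orientation $\overline{Y}_{\{2\backslash b\}}=0$: there scaling by $d_2$ moves nothing, the correlation is constant in $d_2$, and the statement has to be read as implicitly excluding this case.
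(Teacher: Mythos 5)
Your proof is correct and follows essentially the same route as the paper's: both arguments reduce to reading off the asymptotic orders $p_{12},p_{21}\asymp d_2^{-(n_{2\backslash b}+1)}$, $p_{22}\asymp d_2^{-(n_{2\backslash b}+2)}$, $p_{11}\to 1$ via the rank-one determinant expansion, and then comparing the decay of the covariance against that of $\sqrt{p_{1\cdot}p_{2\cdot}p_{\cdot1}p_{\cdot2}}$. Your bookkeeping via the identity $p_{11}-p_{1\cdot}p_{\cdot1}=p_{11}p_{22}-p_{12}p_{21}$ and the homogeneity in the unnormalized weights is a presentational refinement, and your insistence on two-sided $\Theta$-bounds (plus flagging the degenerate case $\overline{Y}_{\{2\backslash b\}}=0$) actually tightens the paper's final step, which is stated as $o(\epsilon_{12})/\mathcal{O}(\epsilon_{12})$ and strictly speaking needs a matching lower bound on the denominator.
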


\noindent
In fact, there exist slightly weaker conditions on $d_1$, which can be taken to $\infty$ at a slower rate than $d_2$, that rate a function of $n_{1\backslash b}$.  However, we focus on the fixed case here for its relative clarity and relevance.


One can also investigate this behavior numerically.  In Figures \ref{fig:equidistant} and \ref{fig:diff}, we see the correlation of two latent variables when their respective observations lie between two component means as a function of the distances from those means.  Figure \ref{fig:equidistant} shows a peak of correlation when the observations are equidistant between the two components and of equal value (ie, $Y_i=Y_j$), with a fast decline to 0 when moving closer to one or the other component mean.  Figures \ref{fig:diff}a and \ref{fig:diff}b are two perspectives on the same surface plot, where the distance between the two component means is fixed, but the observations may not be of the same value, and correlation is shown as a function of each one's distance to one of the component means.

To generate these figures, 48 total observations were used, and the centered sums of squares for either component for all but the two observations being blocked was 80.  For Figure \ref{fig:equidistant}, the range of either axis is 0.5 to 10 (ie, the point at which correlation between latent variables is highest is when both observations are 10 units from each component mean).  Whereas for Figure \ref{fig:diff}, the fixed distance between the two components means is 24 units and axes show the range 7 to 17 units (eg, along the diagonal where correlation is highest, the observations are of equal value, and vary between these two means, and at their midpoint 12 units from both).




\begin{figure}[H]
\begin{center}
\includegraphics[width=0.65\textwidth]{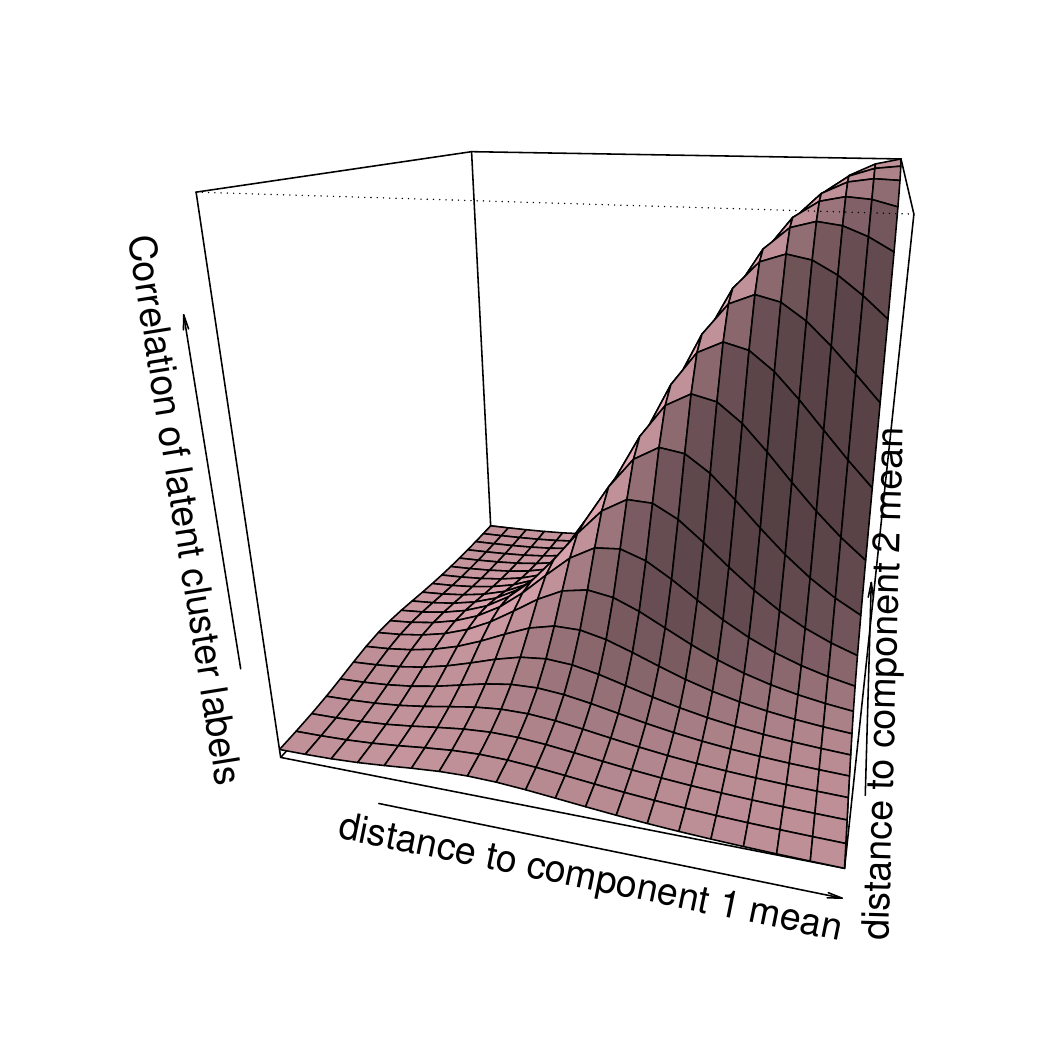} 
\captionsetup{width=0.9\textwidth}
\caption{ {\small A surface plot of correlation between $C_i$ and $C_j$, latent variables of $Y_i$ and $Y_j$ which are of equal value, as a function of their placement  between two component means.  The two axes are the distances from $Y_i=Y_j$ to the two component means.  The increasing ridge along the diagonal represents $Y_i$ and $Y_j$ equidistant between the two components and moving increasingly into their tails. }}
\label{fig:equidistant}  
\end{center}
\end{figure}


\begin{figure}[H]
\centering
\begin{subfigure}{0.45\textwidth}
\includegraphics[width=\linewidth]{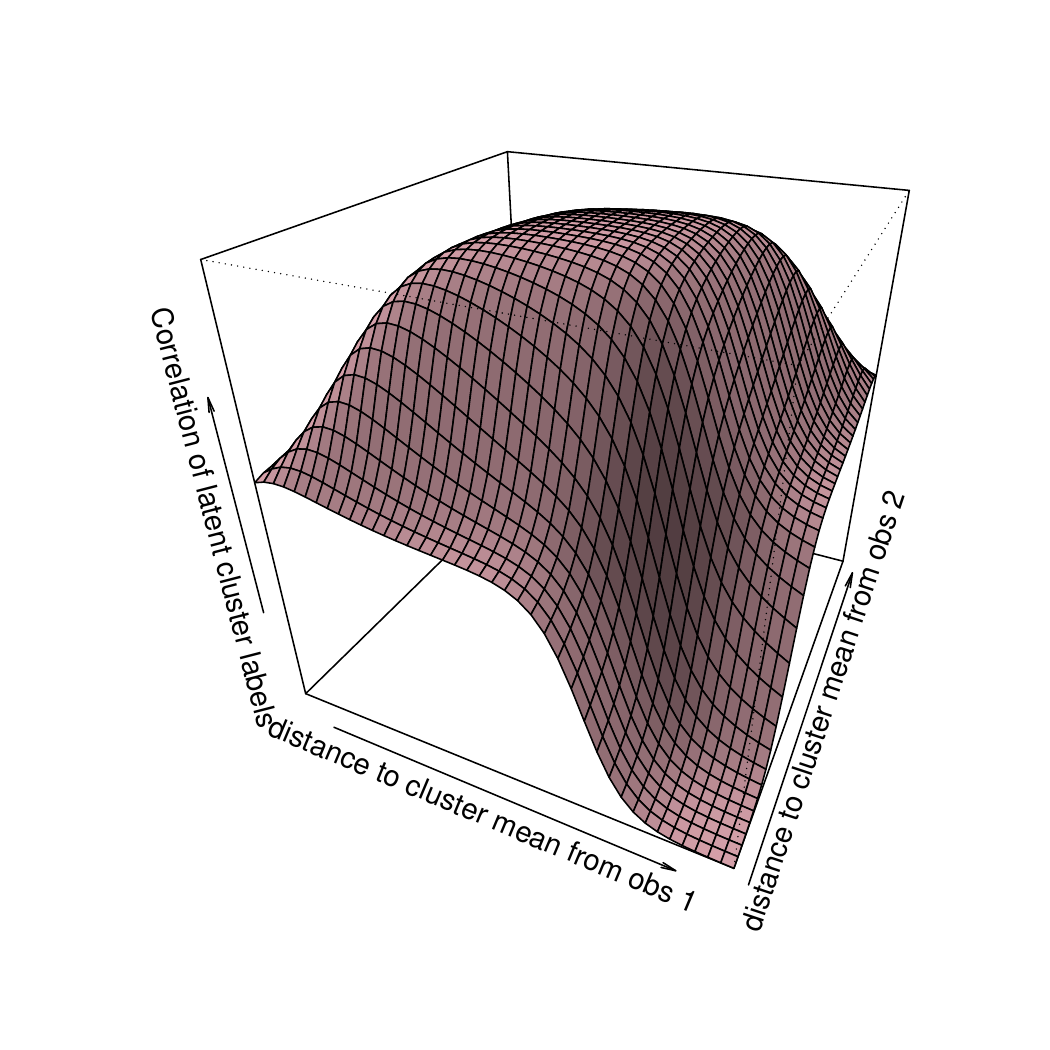}
\caption{} 
\label{fig:diff1}
\end{subfigure}
\hspace*{0.25cm} 
\begin{subfigure}{0.45\textwidth}
\includegraphics[width=\linewidth]{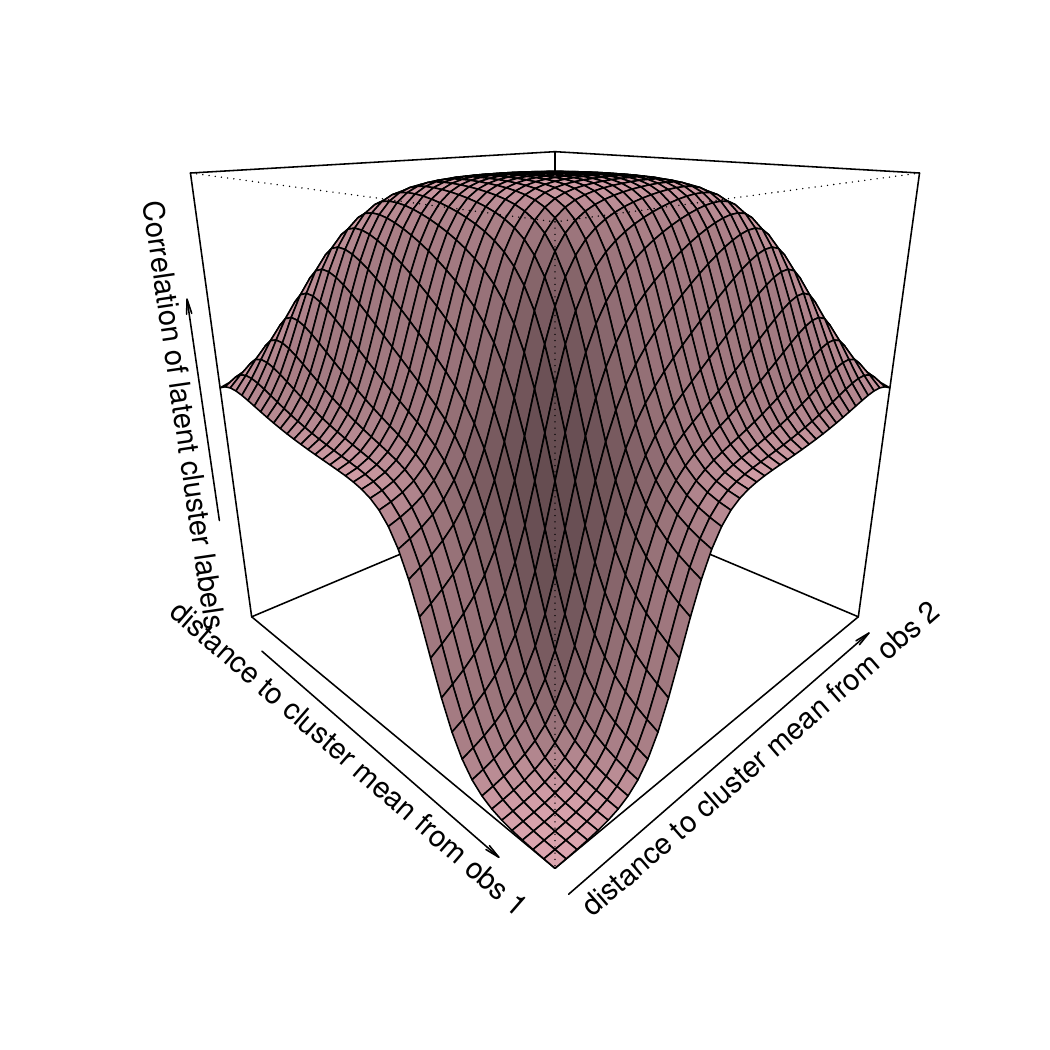}
\caption{} 
\label{fig:diff2}
\end{subfigure}
\captionsetup{width=0.9\textwidth}
\caption{{\small Two perspectives on the same correlation surface of $C_i$ and $C_j$, latent variables of $Y_i$ and $Y_j$, as a function of their placement  between two components.  The two axes are the distances from $Y_i$ and $Y_j$ to one of those component means, with the distance between the two components held constant.  The diagonal across the top of the surface (that closest to the silhouette of the surface) corresponds to $Y_i$ and $Y_j$ of equal value, while the other diagonal (that going from front to back of the surface) corresponds to $Y_i$ and $Y_j$ mirroring one another across the cluster means' midpoint. }}
\label{fig:diff}
\end{figure}

The intuition for the correlation phenomenon is that when observations are allocated to a cluster together, their combined weight pulls the component mean towards themselves synergistically as compared to their allocation being divided between multiple components.  Because Gaussian tails move toward zero increasingly fast as measured by their hazard function and because observations can move arbitrarily far from means, these points' leverage increases quickly and without bound, driving the correlation to 1.  While here we examine the phenomenon when observations are close to one another and pull all component means to a common center,  in Section \ref{sec:synergism} below we consider the dynamic when only a subset of component means in combination with a set of latent variables exhibit synergistic behavior.

\section{Partitioning the posterior}
\label{sec:partition}

Because calculating the convergence rates of Gibbs sampling to a mixture model's stationary distribution requires the composition of $N$ transition matrices of dimension $K^N \times K^N$, which is untenable for even relatively small $N$ and $K$, and further complicated by cluster label switching (\cite{stephens_dealing_2000,fruhwirth-schnatter_markov_2001}), we posit a background of clustering in the data such that some ``clear'' allocation of many latent variables constitutes $1-\epsilon$ for small $\epsilon$ of the posterior mass of the sampled model.

This is to assume we can partition the posterior space into a high probability region for which there is little question about the allocation of this subset of observations,
and one then considers independently the convergence rate of some complementary set of observations, call them ``outliers,'' or observations far and approximately equidistant from two or more component means--if they were not approximately equidistant, they would have a clear allocation to the nearest component and likewise not exhibit significant correlation with other latent variables of similarly-valued observations as demonstrated in Figures \ref{fig:equidistant} and  \ref{fig:diff}.  These outliers exist in the tails between component distributions where we have identified a high degree of correlation in the space of latent variables, and it is only these latent variables whose allocation is unclear and whose Gibbs sampling we seek to improve.  
Use as the partition  $ {\pmb  C} =   {\pmb C}_{ \{ b \}} \cup {\pmb  C}_{\backslash \{ b \}} $ such that ${\pmb  C}_{\backslash \{ b \}}$ are those latent variables whose allocation is relatively clear and equal to $c_{\backslash \{ b \}}$, and ${\pmb C}_{\{b \}}$ are the latent variables associated with outliers ${\pmb Y}_{\{b\}}$, that is,
\begin{align}
\sum_{{\pmb C}_{\{ b \}} \in {\mathcal C_{ \{ b \}  }}} f( {\pmb C}_{\{ b \}  }  , {\pmb C}_{\backslash \{ b \}}=c_{\backslash \{ b \}} | \beta, \theta_0, {\pmb Y} ) \nonumber \\
= f(  {\pmb C}_{\backslash \{ b \}}= & c_{\backslash \{ b \}} |  \beta, \theta_0, {\pmb Y}  ) = 1- \epsilon
\label{eqn:partition_space}
\end{align}
where ${\mathcal C}_{\backslash \{ b \}}$ is the set of all allocations of ${\pmb C}_{\backslash \{ b \}}$, a set of size $K^{N- B}$.  
Ideally, to well understand the clustering of data we would estimate 
$$(\pi^{C_i}_1, \pi^{C_i}_2, \cdots , \pi^{C_i}_K)$$
for each $C_i$ where
$$\sum_{{\pmb C}_{\backslash i} \in {\mathcal C_{\backslash i}}} f(C_i=k, {\pmb C}_{\backslash i} |  \beta, \theta_0, {\pmb Y}  ) =P(C_i=k |  \beta, \theta_0, {\pmb Y}  ) = \pi^{C_i}_k$$
for analogously defined ${\mathcal C_{\backslash i}}$, the set of all allocations of $C_{\backslash i}$, a set of size $K^{N-1}$.
However, here we study and try to improve convergence to 
$$(\tilde{\pi}^{C_i}_1, \tilde{\pi}^{C_i}_2, \cdots , \tilde{\pi}^{C_i}_K)$$
for those $i \in \{ b\}$, where 
\begin{align*}
\sum_{{\pmb C}_{\{b \}  \backslash i} \in {\mathcal C_{ \{b \}  \backslash i}}} f(C_i=k, {\pmb C}_{\{ b \} \backslash i }  & |  {\pmb C}_{\backslash \{ b \}}=c_{\backslash \{ b \}},  \beta, \theta_0, {\pmb Y}  ) \\ 
=f(C_i=k | & {\pmb C}_{\backslash \{ b \}}=c_{\backslash \{ b \}} ,  \beta, \theta_0, {\pmb Y}) = \tilde{\pi}^{C_i}_k
\end{align*}
a quantity in which we condition on ${\pmb C}_{\backslash \{ b \}}=c_{\backslash \{ b \}}$, useful because $\tilde{\pi}_k^{C_i} \in \bigl((1-\epsilon)\,\pi_k^{C_i}, \pi_k^{C_i} + \epsilon \, (1-\pi_k^{C_i})\bigr)$ for each $i$.  

One could alternatively motivate this approach by envisioning the component Gaussian distributions with clipped tails, which only overlap in some outlier region and for which the set of observations ${\pmb Y}_{\backslash \{ b\}}$, those not categorized as outliers, fall only in one such component distribution.  
We can leverage existing understanding of Gibbs sampling \citep{liu_covariance_1994,liu_covariance_1995, sahu_convergence_1999, roberts_updating_1997} to examine convergence of ${\pmb C}_{\{ b\}}$ to ${\pmb {\tilde \pi}}^{C_{\{b \}}}$, 
the joint distribution of ${\pmb C}_{\{b \}}$ 
conditional on ${\pmb C}_{\backslash \{ b \}}=c_{\backslash \{ b \}}$.

\section{Bounding convergence rates}
\label{sec:converge_rate}

\subsection{$K=2$, $B=2$ case}

Define $\{ {\pmb C}_{\{b\}}^{(0)}, {\pmb C}_{\{b\}}^{(1)}, \dots  \}$ a Markov chain generated by Gibbs sampling on ${\pmb C}_{\{ b\}}$, which follow the stationary distribution ${\pmb {\tilde \pi}}^{C_{\{b \}}}$ given the ``clear'' allocation ${\pmb C}_{\backslash \{ b \}}=c_{\backslash \{ b \}}$ of non-outlier observations, ${\pmb Y}_{\backslash \{ b \}}$.  
To make statements on convergence rates analytically simpler, we construct the chain as reversible so that after first sampling $C_{i_1} | {\pmb C}_{\{b \backslash i_1 \}}$, $C_{i_2} | {\pmb C}_{\{b \backslash i_2 \}}$, etc, in ascending order up to $i_B$ for a complete, unique set of indices (a forward pass), we proceed to sample the same indices in descending order per the technique of \citet{fill_eigenvalue_1991} (cf, \citet{liu_covariance_1995,roberts_updating_1997,amit_comparing_1991}), completing the sequence (a backward pass). We use these same orderings as Gibbs sampling proceeds.



Here we continue examination of the case where $K=2$, $B=2$, and the blocking indices are $\{ b \} = \{i, j \}$, and we consider the reversible Markov chain arising under Gibbs sampling.  One leverages $B=2$ so that $C_i$ and $C_j$ can be envisioned as Bernoulli, and so state the following lemma:
\begin{lemma}
For Gibbs sampling of binary $C_i$ and $C_j$ whose chain is reversible, 
$$\sup_{g,h} \; Cor(g(C_i), h(C_j))^2 = Cor(C_i, C_j )^2 = \rho_{\tilde{\pi}} ({\pmb C}_{\{i ,j\}})$$
\label{lemma_invariant}
\end{lemma}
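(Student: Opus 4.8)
The plan is to establish the two equalities in turn. The first, $\sup_{g,h}Cor(g(C_i),h(C_j))^2=Cor(C_i,C_j)^2$, I would get purely from $C_i,C_j$ being binary: every real function on a two-point set is affine in an indicator of one of its values, and correlation is invariant under affine reparametrization up to sign, so no choice of $g,h$ can improve on the raw correlation. The second, $Cor(C_i,C_j)^2=\rho_{\tilde{\pi}}({\pmb C}_{\{i,j\}})$, I would obtain from the classical identification of the subdominant eigenvalue of a reversibilized two-block Gibbs kernel with the squared maximal correlation between the two blocks \citep{liu_covariance_1994,liu_covariance_1995,amit_comparing_1991,roberts_updating_1997}, and then use binariness once more to collapse that maximal correlation to $|Cor(C_i,C_j)|$.

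For the first step I would set $X=\mathbbm{1}(C_i=1)$ and $Y=\mathbbm{1}(C_j=1)$, note that relabelling the two states of a binary variable is an affine map so $Cor(C_i,C_j)=\pm Cor(X,Y)$, write any $g(C_i)=g(2)+(g(1)-g(2))X$ and similarly $h(C_j)=h(2)+(h(1)-h(2))Y$, and conclude that for every non-constant $g,h$ (the only case on which $Cor$ is defined) $|Cor(g(C_i),h(C_j))|=|Cor(X,Y)|=|Cor(C_i,C_j)|$. Squaring and taking the supremum over admissible pairs gives the first equality.

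For the second step I would work in $L^2$ of the stationary distribution ${\pmb{\tilde\pi}}^{C_{\{b\}}}$, write $P_A$ for the conditional-expectation operator $f\mapsto E[f\mid C_j]$ (the effect of resampling $C_i\mid C_j$) and $P_B$ for $f\mapsto E[f\mid C_i]$, both of which are orthogonal projections, hence self-adjoint and idempotent, onto the spaces of $C_j$-measurable and $C_i$-measurable functions. With $B=2$ the Fill-type sweep of \citet{fill_eigenvalue_1991} is ``update $C_i$, update $C_j$'' followed by ``update $C_j$, update $C_i$'', so its kernel is $P_AP_BP_BP_A=P_AP_BP_A$ by idempotency of $P_B$; this is manifestly self-adjoint and positive semidefinite, so $\rho_{\tilde{\pi}}$ is its second-largest eigenvalue. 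On the range of $P_A$ the kernel acts as $P_AP_B$, a product of two orthogonal projections whose nonzero eigenvalues are the squared canonical correlations between $\sigma(C_i)$ and $\sigma(C_j)$; since each of these two-point $\sigma$-algebras gives a two-dimensional function space (constants plus one centered indicator), there is exactly one non-unit such value, namely $Cor(X,Y)^2=Cor(C_i,C_j)^2$, with the remaining eigenvalues of the $4\times 4$ kernel equal to $0$. Hence $\rho_{\tilde{\pi}}({\pmb C}_{\{i,j\}})=Cor(C_i,C_j)^2$, which together with the first step proves the lemma.

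I expect the only genuinely delicate point to be the operator bookkeeping in the last paragraph: verifying that the particular forward/backward scan for $B=2$ really does collapse, via $P_B^2=P_B$, to the single self-adjoint operator $P_AP_BP_A$, and confirming that the geometric convergence rate $\rho_{\tilde{\pi}}$ is this operator's second-largest eigenvalue rather than a spectral-radius quantity that could be sign-sensitive (which cannot occur here since $P_AP_BP_A\succeq 0$). The spectral description of a product of two orthogonal projections in terms of principal angles / canonical correlations is standard and I would simply cite it rather than rederive it.
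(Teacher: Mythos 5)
Your proof is correct, and both equalities are established soundly. The overall skeleton is the same as the paper's: identify the reversibilized two-index sweep with a self-adjoint, positive operator on $L^2(\tilde{\pi})$, show its norm on mean-zero functions is the squared maximal correlation between $C_i$ and $C_j$, and then use binariness (every function on a two-point set is affine in an indicator, and correlation is affine-invariant up to sign) to collapse the supremum to $Cor(C_i,C_j)^2$. Where you differ is in how the second equality is obtained: the paper cites \citet{liu_covariance_1994} Theorem 3.2 for the identity ``operator norm of the forward-pass kernel on mean-zero functions equals the maximal correlation,'' invokes the adjointness of the backward pass, and then uses the $C^*$-identity $\lVert T^{*}T\rVert=\lVert T\rVert^{2}$ together with ``norm equals spectral radius for self-adjoint operators.'' You instead construct the conditional-expectation operators explicitly as orthogonal projections, use idempotency to collapse the four-kernel sweep $P_AP_BP_BP_A$ to $P_AP_BP_A$, and read off the spectrum directly from the canonical-correlation (principal-angle) description of a product of two projections, which in the binary case has exactly one nontrivial value. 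Your route is more self-contained and makes the finite-dimensional linear algebra transparent (it also gives the full spectrum, not just the subdominant eigenvalue), at the cost of invoking the principal-angle characterization; the paper's route is shorter but leans on Liu's theorem as a black box. One small point worth making explicit in your write-up: the convergence rate is the norm of the sweep operator restricted to the orthogonal complement of the constants, and it is positivity of $P_AP_BP_A$ that lets you identify this with the second-largest eigenvalue without worrying about signs — you note this, and it is exactly the role self-adjointness plays in the paper's version as well.
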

\noindent
where $\rho_{\tilde{\pi}} (\cdot )$ is the convergence rate of its argument to ${\pmb {\tilde \pi}}^{C_{\{b \}}}$ or appropriate marginalization thereof according to the argument, and $g,h$ are functions on ${\mathbb R}$ under which $C_i,C_j$ are square integrable.

While the result bears resemblance to \citet{liu_covariance_1994}'s Theorem 3.2, the binary $C_i$ and $C_j$ notwithstanding, here the rate applies to a reversibilized chain, unlike the context in \citet{liu_covariance_1994} which consists of only forward passes of the Gibbs sampler.  The chain considered here therefore has twice as many samples for the same rate and so
reflects a convergence speed that is twice as slow.  This characteristic is consistent with the reversibilized chain updating twice each sample in a row, effectively and inefficiently not exploiting the new information presented by the first update of a component, sampling it a second time before finally moving on to updating the conjugate component.  We use the reversibilized chain to analyze the rate with greater clarity and will focus in what follows on improving that rate by other means.

If $Cor(C_i, C_j)^2$ is the convergence rate to ${\pmb {\tilde \pi}}^{C_{\{b \}}}$ for $\{b \}=\{i,j\}$,
Theorem \ref{theorem_cor} suggests it can be exceedingly slow with no bound away from 1 as observations move increasingly into the tails between two component means.  This is also suggested by Figures \ref{fig:equidistant} and \ref{fig:diff}
where 
correlation approaches 1 quickly and only modest movement into tails is necessary to yield a slow convergence rate for these observations.  This trait is problematic because it suggests that the clustering of observations close to one another and in these outlier regions will converge so slowly that Gibbs sampling may often mischaracterize cluster assignment of these data.  In practice, these observations will be allocated to one or another component, ``sticking" in that allocation and unable to move to an equally fitting one, related to high autocorrelation in the Markov chain.  Their joint membership in multiple components will be lost unless Gibbs sampling is performed for an indeterminate and possibly very long time.  Additionally, usual diagnostics may not detect the lack of convergence as these observations' contributions to the posterior space is relatively small.  And in many applications, including genomics and medicine, correct characterization of these highly leveraged outliers relative to typical, well-populated clusters is often very important.  

\subsection{$K>2$, $B=2$ case}

The $K=2$, $B=2$ case of two components and two observations is useful for exposition, but here we see can serve as a foundation for calculating a convergence rate lower bound in the general case.
While the exact convergence rate of ${\pmb C}_{\{ b\}}$ to ${\pmb {\tilde \pi}}^{C_{\{b \}}}$ would be useful, provision of a lower bound gives an indication of when one should consider performing more sampling or take a blocked sampling approach as we describe in Section \ref{sec:blocking}; ie, if the lower bound is high, one should consider blocked sampling, whereas if the lower bound is lower, it may not be necessary.

Suppose we have $K\geq 2$, $B=2$, then we can state:
\begin{lemma}
Consider having $K\geq 2$, $B=2$, and define $C_{i}^{k'}=I(C_i \leq k')$ and analogously for $C_{j}^{k'}$, indicators for allocation of $Y_i$ and $Y_j$ to the $1^{st}$ through $k'^{\,th}$ components with $1\leq k'<K$.   Then we have 
\begin{align*}
Cor(C_{i}^{k'},C_{j}^{k'})^2 = Cor&(I(C_i \leq k'), I(C_j\leq k'))^2 \\
\leq \sup_{g,h} \; & Cor(g(C_i), h(C_j))^2 = \rho_{\tilde{\pi}} ({\pmb C}_{\{i,j\}})
\end{align*}
for all $k'$, where $\rho_{\tilde{\pi}} ({\pmb C}_{\{i,j\}})$ is the convergence rate of Gibbs sampling with a reversible chain.
\label{lemma_multi_K}
\end{lemma}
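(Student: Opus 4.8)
The plan is to split the claim into the left-hand inequality, which is essentially free, and the right-hand identity, which is the $K$-ary generalization of Lemma~\ref{lemma_invariant} and carries all the content.

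First I would dispose of the inequality $Cor(C_i^{k'},C_j^{k'})^2 \leq \sup_{g,h} Cor(g(C_i),h(C_j))^2$. Since $C_i^{k'}=I(\cdot\leq k')(C_i)$ and $C_j^{k'}=I(\cdot\leq k')(C_j)$ are bounded, hence square-integrable under the conditional stationary law $\tilde\pi = {\pmb{\tilde\pi}}^{C_{\{i,j\}}}$ on $\{1,\dots,K\}^2$, the pair $(C_i^{k'},C_j^{k'})$ is simply one admissible choice of $(g,h)$ in the supremum; enlarging the feasible set can only increase it. This is uniform in $1\leq k'<K$, so it suffices to prove $\sup_{g,h} Cor(g(C_i),h(C_j))^2 = \rho_{\tilde\pi}({\pmb C}_{\{i,j\}})$.

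For that identity I would pass to the Hilbert space $L^2_0(\tilde\pi)$ of functions of $(C_i,C_j)$ that are centered and square-integrable under $\tilde\pi$, and introduce the orthogonal projections $\Pi_i f = E_{\tilde\pi}[f\mid C_i]$ and $\Pi_j f = E_{\tilde\pi}[f\mid C_j]$. The key observation is that resampling one coordinate of $(C_i,C_j)$ from its $\tilde\pi$-conditional given the other coordinate acts on functions exactly as the conditional-expectation projection onto the retained coordinate; hence the forward pass of the Gibbs sweep is a composition of the form $\Pi_j\Pi_i$ and the backward pass a composition of the form $\Pi_i\Pi_j$, so the Fill~(1991) reversibilized one-step operator used in the paper is $T=(\Pi_j\Pi_i)(\Pi_i\Pi_j)=\Pi_j\Pi_i\Pi_j$ after collapsing the repeated middle projection via $\Pi_i^2=\Pi_i$. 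This $T=(\Pi_i\Pi_j)^*(\Pi_i\Pi_j)$ is self-adjoint and positive semidefinite on $L^2_0(\tilde\pi)$, and $\rho_{\tilde\pi}({\pmb C}_{\{i,j\}})$ is by definition its operator norm, i.e. its top eigenvalue. Then $\|T\| = \|\Pi_i\Pi_j\|^2$, and $\|\Pi_i\Pi_j\|$ on $L^2_0$ is attained at some centered $f=f(C_j)$, for which $\|\Pi_i\Pi_j f\|=\|E_{\tilde\pi}[f(C_j)\mid C_i]\|$; maximizing this over unit-norm centered $f(C_j)$ is precisely the variational definition of the largest canonical correlation $\gamma^\ast=\sup_{g,h}Cor(g(C_i),h(C_j))$ (the projection $\Pi_i$ performs the inner optimization over $g(C_i)$). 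Therefore $\rho_{\tilde\pi}({\pmb C}_{\{i,j\}})=\|T\|=(\gamma^\ast)^2=\sup_{g,h}Cor(g(C_i),h(C_j))^2$, and combining with the first paragraph yields the lemma for every $k'$.

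The main obstacle is the bookkeeping in the operator step: one must verify carefully that the Gibbs coordinate update really is the conditional-expectation projection, that the two-sweep Fill construction gives an operator of the form $\Pi_j\Pi_i\Pi_j$ and that the redundant double update $\Pi_i^2=\Pi_i$ genuinely leaves the rate unchanged, and that the top eigenvalue of $\Pi_j\Pi_i\Pi_j$ coincides with the supremum defining maximal correlation rather than merely bounding it. Once the projection picture is set up these are routine facts from the two-component Gibbs literature, and the $K$-ary case presents no extra difficulty over the binary case of Lemma~\ref{lemma_invariant} except that the left-hand supremum no longer collapses to the single scalar $Cor(C_i,C_j)^2$, which is exactly why only an inequality (and not an equality) is available for the indicator functionals $C_i^{k'},C_j^{k'}$.
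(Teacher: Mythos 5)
Your proposal is correct and follows essentially the same route as the paper: the inequality is immediate because $(I(\cdot\leq k'),I(\cdot\leq k'))$ is just one admissible pair in the supremum, and the equality $\sup_{g,h}Cor(g(C_i),h(C_j))^2=\rho_{\tilde\pi}({\pmb C}_{\{i,j\}})$ is exactly the operator-norm argument (forward and backward sweeps as conditional-expectation projections, Fill's reversibilization giving the self-adjoint operator $\Pi_j\Pi_i\Pi_j=(\Pi_i\Pi_j)^*(\Pi_i\Pi_j)$ whose norm is the squared maximal correlation) that the paper delegates to the proof of Lemma~\ref{lemma_invariant}. Your only departure is that you inline that argument and explicitly note that the binary-specific collapse of the supremum is the sole part of Lemma~\ref{lemma_invariant} that fails for $K>2$ --- a point the paper's one-line citation leaves implicit.
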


\begin{proof}
The last equality follows 
from Lemma \ref{lemma_invariant}, and the inequality follows from the use of sup.  
\end{proof}

Practically, we can calculate $Cor(C_{i}^{k'},C_{j}^{k'})$ and therefore lower bound $\rho_{\tilde{\pi}}({\pmb C}_{\{i,j \}})$ by forming a $K \times K$ contingency table, call it ${\text {\bf U}}^{\{ ij \}}$, defining the joint distribution of $C_i$ and $C_j$, and partition it into a $2 \times 2$ contingency table with application of the indicator function $I(\cdot \leq k')$ to $C_i$ and $C_j$, $1\leq k'<K$.  In keeping with developed notation, element $l,m$ in ${\text {\bf U}}^{\{ ij \}}$ indicates allocation of $C_i$ to the $l^{th}$ component and $C_j$ to the $m^{th}$ component.  The calculation is similar to that described in Section \ref{sec:correlation}, but here summing over submatrices of elements of  ${\text {\bf U}}^{\{ ij \}}$ according to the partition to obtain expressions analogous to the $p_{11}$, $p_{21}$, $p_{12}$, and $p_{22}$ terms in that section. So define
\begin{align}
u_{lm}= &f(C_i=l, C_j=m \,|\, {\pmb C}_{\backslash \{ i,j \}} ,  \beta,  \, \theta_0, {\pmb Y}  ) \nonumber \\
\propto &  \prod_{k=1}^K  \frac{\;\; \Gamma (\beta + n^*_k )   \;\; \Pi_{i=1}^D  \Gamma (\nu_0 + n^*_{k}+1-i) }{ |S^*_{\{k\}}|^{\nu_0/2 + n^*_k/2} \; (\kappa_0 + n^*_k)^{D/2} } 
\label{eqn:product}
\end{align}
where 


\begin{align*}
|& S^{*}_{\{k\}}| =   \Bigr| S_{{\{k \backslash b \}}}  + I_{l=m}V_{\{ ij \}}+  \;\;\;\;\;\;\;\;\;\;\;\;\;\;\;\;\;    \\  & \frac{(I_{k=l} + I_{k=m} )  \cdot n_{k\backslash b}}{n_{k\backslash b}+I_{k=l} + I_{k=m}   } (\overline{Y}_{\{ b_k \}_{lm}} - \overline{Y}_{\{k\backslash b \}} )^T(\overline{Y}_{\{ b_k \}_{lm}} - \overline{Y}_{\{k\backslash b \}} ) \Bigr|  
\end{align*}

$$ n_k^{{\tiny *}}=   n_{k\backslash b} +I_{k=l} + I_{k=m} $$
and

$$\overline{Y}_{\{ b_k \}_{lm}} = \frac{ \Bigl( I_{k=l} Y_i + I_{k=m} Y_j    \Bigr)}{I_{k=l} + I_{k=m}} $$
where $V_{\{ ij \}}$ is the variance of the $Y_i$ and $Y_j$, and 
 $I_{l=m}$ is a (condensed) indicator function for $C_i$ and $C_j$'s allocations to the same component (ie, $C_i=l=m=C_j$) and is 0 otherwise.
 Also, one observes that with the addition of the indicators to $n_{k\backslash b}$ for the $C_i$ and $C_j$ joint distribution calculation, the $n_k^*$ for one set of indices $l,m$ may be a different value than the $n_k^*$ from another set of indices. And while in the $K=2$ case each $p_{lm}$ consisted of the product of 2 terms, here each $u_{lm}$
  is a $K$ term product.







The $p_{11}^{k'}$, $p_{12}^{k'}$, $p_{21}^{k'}$, and $p_{22}^{k'}$ terms needed for calculating correlation of $I(C_i \leq k')$ and $I(C_j \leq k')$ are $$p_{11}^{k'} = \gamma \, \sum_{l\leq k', \\ m\leq k'} u_{lm} \,\, , \;\;\;\;\; p_{22}^{k'} = \gamma \, \sum_{l> k', \\ m> k'} u_{lm} $$
$$p_{21}^{k'} = \gamma \, \sum_{l> k' \\m\leq k'} u_{lm} \,\,  , \;\;\;\;\;\;\mbox{and}\;\;\;\;\;\; p_{12}^{k'} = \gamma \, \sum_{l\leq k', \\m> k'} u_{lm} $$
where we recycle $\gamma$ as a normalizing constant.  The lower bound of the convergence rate $\rho_{\tilde{\pi}} (  {\pmb C}_{\{i,j\}})$ is then
\begin{align}
\bigl((p_{11}^{k'} - p_{1\cdot}^{k'} p_{\cdot 1}^{k'}  )/((p_{11}^{k'} + p_{12}^{k'})(p_{21}^{k'} + p_{22}^{k'}))  \bigr)^2 = Cor(C_i^{k'},C_j^{k'})^2\leq \rho_{\tilde{\pi}} ({\pmb C}_{\{i,j\}})
\label{eqn:pi}
\end{align}
using symmetry in $p_{12}^{k'}$ and $p_{21}^{k'}$. 
As in the $K=2$ case, the diagonal entries of the contingency table become arbitrarily large relative to the off-diagonal as the distance between the means of the blocking set allocated to component $k$ and that component become large.  As a result, this lower bound can again become arbitrarily close to 1, which is shown more generally in Theorem \ref{theorem_main}.

While any function that is an indicator for a subset of component allocations will provide a valid lower bound on the convergence rate of $ \rho_{\tilde{\pi}} ({\pmb C}_{\{i,j\}})$, tighter bounds can be achieved when $p_{11}^{k'}$ and $p_{22}^{k'}$ are closer in value.



\subsection{$K>2$, $B>2$ case}
For $B >  2$, we calculate the joint distribution of $C_i$ and $C_j$ having marginalized out ${\pmb C}_{\{b \}\backslash ij}$, the blocked latent variables excluding $C_i$ and $C_j$.  Then one can similarly partition that joint distribution according to an indicator function as in the $K>2$, $B=2$ case to calculate a correlation, which one can show
will be a lower bound on the convergence rate of ${\pmb C}_{\{b \}}$ to ${\pmb {\tilde \pi}}^{C_{\{b \}}}$ under Gibbs sampling.  Since calculation of the marginalized joint distribution requires a sum over all allocations of the blocking set complementary to $C_i$ and $C_j$, computation becomes more difficult the larger $B$.  We calculate it with the expression
\begin{align}
\sum_{{\pmb C}_{\{b \}  \backslash ij} \in {\mathcal C_{ \{b \}  \backslash ij}}} f(C_i=l,C_j=m , {\pmb C}_{\{ b \} \backslash ij }   | & {\pmb C}_{\backslash \{ b \}}=c_{\backslash \{ b \}},  \beta,  \, \theta_0, Y ) \nonumber \\
=f(C_i=k_1,C_j=k_2 & |  {\pmb C}_{\backslash \{ b \}}=c_{\backslash \{ b \}} ,  \beta,  \, \theta_0, Y ) 
\label{eqn:integrate}
\end{align}
where ${\mathcal C_{ \{b \}  \backslash ij}}$ is the collection of possible allocations of ${\pmb Y}_{\{ b\} \backslash ij}$ to the $K$ components.  Under the assumption of identical valued $Y_i$ for $i\in \{b \}$, the sum is not overly burdensome with $K+B-2 \choose B-2$ possible allocations in contrast to the $K^{B-2}$ allocations otherwise.  But by designation of outliers on a common set of components, observations will tend to be close to one another in value, especially with respect to their contribution to sums of squares of the components into which they could be sampled, and so the assumption is not especially restrictive.  Since there are only $B-2$ different possible sums of squares for a given component and on $K$ components, one can cache these values and then combine as needed according to each $K+B-2 \choose B-2$ allocation for efficient computation.  
The sum can take the form of constructing a $K \times K$ contingency table representing the joint distribution of $C_i$ and $C_j$ for each of the $K+B-2 \choose B-2$ allocations of the complementary set $C_{\{b \}\backslash ij}$ and summing across the tables element wise, yielding an unnormalized $K\times K$ table corresponding to the bivariate joint distribution defined in Equation (\ref{eqn:integrate}).  



To perform the calculation, for each ${{\pmb C}_{\{b \}\backslash ij}}\in  {\mathcal C_{ \{b \}  \backslash ij}}$, define  
${\textbf U}_{C_{\{b \}\backslash ij}}$ as the $K\times K$ matrix giving the joint distribution of $C_i, C_j$ under allocation ${\pmb C}_{\{b \}\backslash ij}$ of the other $B-2$ observations ${\pmb Y}_{\{b \}\backslash ij}$ in the blocking set.  That is, for the $l,m$ element of ${\textbf U}_{C_{\{b \}\backslash ij}}$ which we denote $[  {\textbf U}_{C_{\{b \}\backslash ij}} ]_{lm}$ we have


\begin{align}
[  {\textbf U}_{C_{\{b \}\backslash ij}} ]_{lm} \propto  f(C_i=l, C_j=m,{\pmb C}_{\{b\}\backslash ij} | {\pmb C}_{\backslash \{ b \}} ,  \beta,  \, \theta_0, Y  ) \nonumber \\ 
\propto \prod_{k=1}^K  \frac{\;\; \Gamma (\beta + n^{**}_k )   \;\; \Pi_{i=1}^D  \Gamma (\nu_0 + n^{**}_{k}+1-i) }{ |S^{**}_{\{k\}}|^{\nu_0/2 + n^{**}_k/2} \; (\kappa_0 + n^{**}_k)^{D/2} } 
\label{eqn:product2}
\end{align}
where 

\begin{align*}
|& S^{**}_{\{k\}}| =   \Bigr| S_{{\{k \backslash b \}}}  + V^{C_{\{b \}\backslash ij}}_{\{ b_k \}_{lm}} +  \;\;\;\;\;\;\;\;\;\;\;\;\;\;\;\;\;    \\  & \frac{(I_{k=l} + I_{k=m} +n^{C_{\{b \}\backslash ij}}_k )  \cdot n_{k\backslash b}}{n_{k\backslash b}+I_{k=l} + I_{k=m}    +n^{C_{\{b \}\backslash ij}}_k  } (\overline{Y}^{C_{\{b \}\backslash ij}}_{\{ b_k \}_{lm}} - \overline{Y}_{\{k\backslash b \}} )^T(\overline{Y}^{C_{\{b \}\backslash ij}}_{\{ b_k \}_{lm}} - \overline{Y}_{\{k\backslash b \}} ) \Bigr|  
\end{align*}

$$ n_k^{{\tiny **}}=   n_{k\backslash b} +I_{k=l} + I_{k=m} + n^{C_{\{b \}\backslash ij}}_k$$
and

$$\overline{Y}^{C_{\{b \}\backslash ij}}_{\{ b_k \}_{lm}} = \frac{\Bigl( I_{k=l} Y_i + I_{k=m} Y_j  + \sum_{\substack{ \{n: C_n=k\; \tiny{} \\ \mbox{\tiny{under}} \;C_{\{ b\}\backslash ij }\}}} Y_n   \Bigr)}{I_{k=l} + I_{k=m} + n^{C_{\{b \}\backslash ij}}_k}  $$
that is, the $Y_n$'s allocated to component $k$ under ${\pmb C}_{\{ b\}\backslash ij}$ with the addition of $Y_i$ and $Y_j$ per the $l,m$ indexing, indicative of the allocations of $C_i$ and $C_j$ to components $l$ and $m$, respectively, for calculating their joint distribution, while
$V^{C_{\{b \}\backslash ij}}_{\{ b_k \}_{lm}}$ is the variance of the $Y_n$'s involved in the mean calculation of $\overline{Y}^{C_{\{b \}\backslash ij}}_{\{ b_k \}_{lm}} $.

 While the notation becomes heavy,
 the important point is that the calculations are driven by the difference between the component mean without the blocked observations and the mean of the observations in the block allocated to that component, which may include $Y_i$ and $Y_j$ as the joint distribution of $C_i$ and $C_j$  stipulates.

Now define $\textbf{U}^{\{ b \}}$ to be the $K\times K$ matrix giving the joint distribution of $C_i, C_j$ having marginalized out the complementary set of latent variables in the blocking set, ${\pmb C}_{\{ b\}\backslash ij}$, that is, $\textbf{U}^{\{ b \}}$ is 
$$ \textbf{U}^{\{ b \}}  = \sum_{{C_{\{b \}\backslash ij}} \in {\mathcal{C}_{\{b \}\backslash ij}}} 
      {\textbf U}_{C_{\{b \}\backslash ij}} 
$$
Then we claim the following:

\begin{theorem}
For $u^{\{b \}}_{lm}$, the $l,m$ element of $\textbf{U}^{\{ b \}}$, $\gamma$ a normalizing constant, and for probabilities $p_{11}^{\{b \}} = \gamma \, \sum_{l\leq k', \\m\leq k'} u_{lm}^{\{b \}}, p_{22}^{\{b \}} = \gamma \, \sum_{l> k', \\m> k'} u_{lm}^{\{b \}}, p_{21}^{\{b \}} = \gamma \, \sum_{l> k', \\ m\leq k'} u_{lm}^{\{b \}} ,$ and $p_{12}^{\{b \}} = \gamma \, \sum_{l\leq k', \\m> k'} u_{lm}^{\{b \}} $, 
then the lower bound for  $\rho_{\tilde{\pi}} ({\pmb C}_{\{b \}})$ becomes 
\begin{align*}
\bigl((p_{11}^{\{b \}} - p_{1\cdot}^{\{b \}}p_{\cdot 1}^{\{b \}}  )/((p_{11}^{\{b \}} + p_{12}^{\{b \}})(p_{21}^{\{b \}} + p_{22}^{\{b \}})) \bigr)^2 \leq  \rho_{\tilde{\pi}} ({\pmb C}_{\{i,j\}}) \leq  \rho_{\tilde{\pi}} ({\pmb C}_{\{b \}}) 
\end{align*}
\label{theorem_general_KB}
\end{theorem}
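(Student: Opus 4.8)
The plan is to establish the two inequalities in the chain separately, treating the right-hand inequality as the substantive new content and the left-hand inequality as an instance of the lower-bounding technique already developed for the $K>2$, $B=2$ case. For the right-hand inequality, $\rho_{\tilde{\pi}}({\pmb C}_{\{i,j\}}) \leq \rho_{\tilde{\pi}}({\pmb C}_{\{b\}})$, the idea is a data-processing / marginalization argument: the pair $(C_i, C_j)$ is a coordinate projection of the full blocked vector ${\pmb C}_{\{b\}}$, and the Gibbs sampler restricted to these two coordinates (after integrating out ${\pmb C}_{\{b\}\backslash ij}$) is a ``lumped'' version of the full chain. Formally I would invoke the variational characterization of the convergence rate (spectral gap / maximal correlation) of the reversible chain: $\rho_{\tilde\pi}({\pmb C}_{\{b\}}) = \sup_{f} \{ \text{Cor}(f({\pmb C}_{\{b\}}^{(0)}), f({\pmb C}_{\{b\}}^{(1)})) \}$ over mean-zero square-integrable $f$, and observe that functions depending only on $(C_i, C_j)$ form a subspace of all functions of ${\pmb C}_{\{b\}}$, so the sup over the smaller class cannot exceed the sup over the larger. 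The subtlety is that the chain on $(C_i, C_j)$ obtained by watching those two coordinates of the full Gibbs sampler is not literally a Gibbs sampler on the marginal $\textbf{U}^{\{b\}}$; but its lag-one autocorrelation for functions of $(C_i,C_j)$ agrees with that of the full chain, which is exactly what the variational bound needs, so $\rho_{\tilde\pi}({\pmb C}_{\{i,j\}})$ as defined via $\textbf{U}^{\{b\}}$ and Lemma \ref{lemma_invariant}/\ref{lemma_multi_K} lower-bounds $\rho_{\tilde\pi}({\pmb C}_{\{b\}})$.

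For the left-hand inequality, $\bigl((p_{11}^{\{b\}} - p_{1\cdot}^{\{b\}}p_{\cdot 1}^{\{b\}})/((p_{11}^{\{b\}}+p_{12}^{\{b\}})(p_{21}^{\{b\}}+p_{22}^{\{b\}}))\bigr)^2 \leq \rho_{\tilde\pi}({\pmb C}_{\{i,j\}})$, I would argue exactly as in Lemma \ref{lemma_multi_K} and Equation (\ref{eqn:pi}): the indicators $I(C_i \leq k')$ and $I(C_j \leq k')$ are particular functions $g(C_i)$, $h(C_j)$, so $\text{Cor}(I(C_i\leq k'), I(C_j\leq k'))^2 \leq \sup_{g,h} \text{Cor}(g(C_i),h(C_j))^2 = \rho_{\tilde\pi}({\pmb C}_{\{i,j\}})$, where the sup-equals-rate identity comes from Lemma \ref{lemma_invariant} applied to the binary reduction of the pair. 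The only thing to check here is that the $p^{\{b\}}_{lm}$ quantities genuinely are the $2\times2$ collapse of the correct bivariate marginal $\textbf{U}^{\{b\}}$, i.e. that the summing-over-$\textbf{U}_{C_{\{b\}\backslash ij}}$ construction in Equations (\ref{eqn:integrate})--(\ref{eqn:product2}) correctly realizes $f(C_i=l, C_j=m \mid {\pmb C}_{\backslash\{b\}}=c_{\backslash\{b\}}, \beta, \theta_0, {\pmb Y})$ up to normalization; this is a bookkeeping consequence of the factorization in Equation (\ref{eqn:marg}) and the definition of $S^{**}_{\{k\}}$, $n^{**}_k$ via the sum-of-squares decomposition, so I would state it and defer the algebra.

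Concatenating, I would write: the displayed quantity equals $\text{Cor}(C_i^{k'}, C_j^{k'})^2$ for the binary reduction (by the definition of $p^{\{b\}}_{lm}$ and the formula for correlation of two Bernoullis, with the symmetry $p_{12}^{\{b\}} = p_{21}^{\{b\}}$ under equal-valued $Y_i$-type assumptions used to simplify the denominator), which is $\leq \rho_{\tilde\pi}({\pmb C}_{\{i,j\}})$ by the $\sup_{g,h}$ step, which in turn is $\leq \rho_{\tilde\pi}({\pmb C}_{\{b\}})$ by the marginalization/subspace argument. I expect the main obstacle to be making the middle step — ``$\rho_{\tilde\pi}({\pmb C}_{\{i,j\}}) \leq \rho_{\tilde\pi}({\pmb C}_{\{b\}})$'' — fully rigorous, because it requires a clean statement that the maximal-correlation convergence rate is monotone under passing to a sub-$\sigma$-algebra of coordinates for a reversible chain; the cleanest route is to phrase everything through the $\sup$ over test functions (as Lemma \ref{lemma_invariant} already does) rather than through operator norms of transition matrices, since then monotonicity is immediate from inclusion of function classes and no quotient-chain / lumpability hypothesis is actually needed. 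The remaining steps are either direct appeals to Lemmas \ref{lemma_invariant} and \ref{lemma_multi_K} or the routine verification that the contingency-table construction computes the intended marginal distribution.
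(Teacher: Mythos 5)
Your handling of the first (left-hand) inequality matches the paper's: the displayed quantity is $Cor(C_i^{k'},C_j^{k'})^2$ for the $2\times 2$ collapse of $\textbf{U}^{\{ b \}}$, and Lemma \ref{lemma_multi_K} (the indicators being particular choices of $g,h$ inside the $\sup$) gives the bound; the verification that the summed contingency tables realize the intended bivariate marginal is indeed just bookkeeping.

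The problem is the middle inequality $\rho_{\tilde{\pi}} ({\pmb C}_{\{i,j\}}) \leq \rho_{\tilde{\pi}} ({\pmb C}_{\{b \}})$, and it sits exactly where you flagged the obstacle --- but your proposed fix does not close it. Your subspace argument controls $\sup_{f} Cor\bigl(f(C_i^{(0)},C_j^{(0)}),\, f(C_i^{(1)},C_j^{(1)})\bigr)$, the lag-one autocorrelation of functions of the two watched coordinates \emph{along the full $B$-coordinate chain}; that this is at most $\rho_{\tilde{\pi}}({\pmb C}_{\{b\}})$ is fine. But $\rho_{\tilde{\pi}}({\pmb C}_{\{i,j\}})$ is, via Lemmas \ref{lemma_invariant} and \ref{lemma_multi_K}, the quantity $\sup_{g,h} Cor(g(C_i),h(C_j))^2$ --- a \emph{static} maximal correlation under the stationary bivariate marginal $\textbf{U}^{\{ b \}}$, equivalently the rate of the \emph{collapsed} two-coordinate sampler whose conditionals are those of $\textbf{U}^{\{ b \}}$. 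Your remark that the watched pair's lag-one autocorrelation ``agrees with that of the full chain'' is true but does not help: nothing identifies the static maximal correlation with a lag-one autocorrelation of the full chain, so inclusion of function classes yields no inequality between a correlation computed under $\tilde{\pi}$ alone and one computed under the full chain's transition kernel. The paper closes this gap by invoking Theorem 1 of \citet{liu_collapsed_1994}: the collapsed sampler (the two-coordinate sampler on the marginal of $(C_i,C_j)$ with ${\pmb C}_{\{b\}\backslash ij}$ integrated out) has operator norm no larger than that of the full sampler, which, combined with the self-adjoint construction from Lemma \ref{lemma_invariant}, gives the inequality. That collapsing theorem compares two genuinely different Markov chains via an interleaving Cauchy--Schwarz argument, not via restriction of a variational sup to a sub-$\sigma$-algebra; some such ingredient is needed and is absent from your proposal.
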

%
%
\noindent
\begin{proof}
Using the argument in the Proof of Lemma \ref{lemma_invariant} with respect to constructing self-adjoint operators for reversible chains and inferring on their spectral norms, one can apply Liu's Theorem 1, concluding that marginalization reduces the convergence rate and giving the second inequality \citep{liu_collapsed_1994}.
The first inequality follows from Lemma \ref{lemma_multi_K}.


\end{proof}


\begin{theorem}
Consider the scenario from Theorem \ref{theorem_cor} 
where $d_k$ scales $\overline{Y}_{\{k \backslash b \}}$ with $\overline{Y}_{\{k \backslash b \}}\neq 0$ for all $k \ \in \{1, \dots , K \}$, with $d_k \rightarrow \infty$ so that the distance from 
$\overline{Y}^{C_{\{b \}\backslash ij}}_{\{ b_k \}}$ to component mean $d_k \, \overline{Y}_{\{k \backslash b \}}$ increases without bound for all $C_{\{b \}\backslash ij} \in {\mathcal{C}_{\{b \}\backslash ij}}$, where $d_l = O(d_m)$ for all $l,m$.  
Assume $n_{k\backslash b}>0$ for all $k$.  Then for $I(\cdot \leq k')$ for any $1\leq k' < K$, $$(p_{11}^{\{b \}} - p_{1\cdot}^{\{b \}}p_{\cdot 1}^{\{b \}}  )/\bigl((p_{11}^{\{b \}} + p_{12}^{\{b \}})(p_{21}^{\{b \}} + p_{22}^{\{b \}}) \bigr)  \rightarrow 1 $$  
Since  $(p_{11}^{\{b \}} - p_{1\cdot}^{\{b \}}p_{\cdot 1}^{\{b \}}  )/\bigl((p_{11}^{\{b \}} + p_{12}^{\{b \}})(p_{21}^{\{b \}} + p_{22}^{\{b \}}) \bigr) \leq \rho_{\tilde{\pi}}({\pmb C}_{\{b \}})$, we have 
$ \rho_{\tilde{\pi}}({\pmb C}_{\{b \}}) \rightarrow 1 $
\label{theorem_main}
\end{theorem}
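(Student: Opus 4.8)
The plan is to show the displayed quantity tends to $1$ by pinning down the exact polynomial rate in the $d_k$ at which every entry of $\textbf{U}^{\{b\}}$ decays, and then doing elementary algebra on the four aggregated masses $p_{11}^{\{b\}},p_{12}^{\{b\}},p_{21}^{\{b\}},p_{22}^{\{b\}}$. Since $d_l=O(d_m)$ holds for every ordered pair $l,m$, all $d_k$ are of a common order; write $d_k\asymp d$ with $d\to\infty$. First I would establish determinant asymptotics for Equation~(\ref{eqn:product2}): fix any full allocation of the blocking set (which fixes $C_i,C_j$ and ${\pmb C}_{\{b\}\backslash ij}$), and let $m_k\ge 1$ be the number of blocked observations that component $k$ receives. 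Then $|S^{**}_{\{k\}}|=|B_k+c_k\,w_k w_k^{T}|$ with $B_k=S_{\{k\backslash b\}}+V^{C_{\{b\}\backslash ij}}_{\{b_k\}_{lm}}\succ 0$ constant, $c_k=\tfrac{m_k n_{k\backslash b}}{n_{k\backslash b}+m_k}>0$ constant, and $w_k=d_k\overline{Y}_{\{k\backslash b\}}-\overline{Y}^{C_{\{b\}\backslash ij}}_{\{b_k\}_{lm}}$. By the matrix determinant lemma, $|S^{**}_{\{k\}}|=|B_k|(1+c_k\,w_k^{T}B_k^{-1}w_k)$, and since $\overline{Y}_{\{k\backslash b\}}\neq 0$ and $B_k^{-1}\succ 0$ (using $S_{\{k\backslash b\}}$ nonsingular), $w_k^{T}B_k^{-1}w_k\sim d_k^{2}\,\overline{Y}_{\{k\backslash b\}}^{T}B_k^{-1}\overline{Y}_{\{k\backslash b\}}$, so $|S^{**}_{\{k\}}|\sim C_k d_k^{2}$ with $C_k>0$. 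If $m_k=0$ then $|S^{**}_{\{k\}}|=|S_{\{k\backslash b\}}|$ is constant in $d$. The rank-one structure of the ``between'' perturbation is what forces $d_k^{2}$ rather than $d_k^{2D}$, and the ``within'' term and cross term only contribute lower-order corrections.

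Next, because the $\Gamma(\beta+n^{**}_k)$, $\Gamma(\nu_0+n^{**}_k+1-i)$, and $(\kappa_0+n^{**}_k)^{D/2}$ factors depend only on integer counts and are strictly positive under the standing hyperparameter assumptions, each summand $[\textbf{U}_{C_{\{b\}\backslash ij}}]_{lm}$ behaves as a positive constant times $\prod_{k:\,m_k\ge 1}(C_k d_k^{2})^{-(\nu_0+n_{k\backslash b}+m_k)/2}\asymp d^{-E}$, where $E=B+\sum_{k:\,m_k\ge 1}(\nu_0+n_{k\backslash b})$ using $\sum_k m_k=B$. I would then minimize $E$ over allocations: it is smallest when all of ${\pmb C}_{\{b\}}$ is concentrated on one component, and among these the smallest is achieved at a component minimizing $n_{k\backslash b}$, giving $E_{\min}=B+\nu_0+\min_k n_{k\backslash b}$; any allocation with $C_i\neq C_j$ uses at least two components and so has $E\ge B+2\nu_0+(\text{sum of the two smallest }n_{k\backslash b})$, exceeding $E_{\min}$ by at least $\nu_0+n_{(2)}>0$. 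Since there are finitely many allocations and every contribution is positive (no cancellation), summing over them gives, with $A:=\arg\min_{k\le k'}n_{k\backslash b}$ and $Z:=\arg\min_{k>k'}n_{k\backslash b}$ (both ranges nonempty since $1\le k'<K$): $a:=p_{11}^{\{b\}}\asymp d^{-e_a}$ with $e_a=B+\nu_0+n_{A\backslash b}$, $b:=p_{22}^{\{b\}}\asymp d^{-e_b}$ with $e_b=B+\nu_0+n_{Z\backslash b}$ (each dominated by a diagonal entry), and $c:=p_{12}^{\{b\}},\,d:=p_{21}^{\{b\}}\asymp d^{-e_c}$ with $e_c=B+2\nu_0+n_{A\backslash b}+n_{Z\backslash b}$ (only off-diagonal entries); note $e_c-e_a=\nu_0+n_{Z\backslash b}>0$ and $e_c-e_b=\nu_0+n_{A\backslash b}>0$, so no case analysis on which diagonal block dominates is needed.

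After normalizing so that $a+b+c+d=1$, the target ratio equals $(ab-cd)/((a+c)(b+d))$ by the binary-correlation identity already used in Equation~(\ref{eqn:pi}). The orders above give $c/a\asymp d^{-(\nu_0+n_{Z\backslash b})}\to 0$, $d/b\asymp d^{-(\nu_0+n_{A\backslash b})}\to 0$, hence $(a+c)\sim a$, $(b+d)\sim b$, and $cd/(ab)=(c/a)(d/b)\to 0$, so $ab-cd\sim ab$ and $(ab-cd)/((a+c)(b+d))\to ab/(ab)=1$. This proves the displayed limit; combined with Theorem~\ref{theorem_general_KB}, which gives that this quantity (or its square) lower-bounds $\rho_{\tilde{\pi}}({\pmb C}_{\{i,j\}})\le\rho_{\tilde{\pi}}({\pmb C}_{\{b\}})$, together with $\rho_{\tilde{\pi}}({\pmb C}_{\{b\}})\le 1$, a squeeze yields $\rho_{\tilde{\pi}}({\pmb C}_{\{b\}})\to 1$.

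I expect the main obstacle to be the determinant/order bookkeeping in the first two steps: justifying cleanly and uniformly over the finitely many allocations that the rank-one ``between'' term yields exactly $|S^{**}_{\{k\}}|\asymp d_k^{2}$ (so that the per-entry exponent is $B+\sum_{k:m_k\ge1}(\nu_0+n_{k\backslash b})$), and proving the combinatorial claim that concentrating the block on a single minimal-$n_{k\backslash b}$ component strictly minimizes that exponent with a strictly positive gap to any split allocation. Once those exponents are fixed, the aggregation into $a,b,c,d$ and the final limit are routine.
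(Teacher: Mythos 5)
Your proposal is correct, and it reaches the limit by the same two pillars the paper uses: the matrix determinant lemma applied to the rank-one ``between'' perturbation to get $|S^{**}_{\{k\}}|\asymp d_k^2$ for every component receiving at least one blocked observation (with the cross and ``within'' terms as lower-order corrections, exactly as in the $\kappa_{lm}$ bookkeeping of the Theorem~\ref{theorem_cor} proof), followed by diagonal dominance of the aggregated $2\times 2$ table and the observation that no case analysis on which diagonal block wins is needed. Where you genuinely depart from the paper is in \emph{where} the asymptotic comparison is performed. The paper asserts, for each fixed complementary allocation $C_{\{b\}\backslash ij}$, that the off-diagonal row sums of ${\textbf U}_{C_{\{b\}\backslash ij}}$ are $o$ of the corresponding diagonal entry (Equation~(\ref{eqn:diag_dom})) and then sums over the finite set ${\mathcal C}_{\{b\}\backslash ij}$. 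You instead sum first and compute the exact decay exponent $E=B+\sum_{k:m_k\geq 1}(\nu_0+n_{k\backslash b})$ of each entry of ${\textbf U}^{\{b\}}$ by minimizing over allocations, identifying the concentrated allocations as the dominant contributors to the diagonal. This buys you something real: when $B>2$ and the complementary block is split across components, a diagonal entry $[{\textbf U}_{C_{\{b\}\backslash ij}}]_{l,l}$ can have the \emph{same} active component set, hence the same decay exponent, as an off-diagonal entry in its row (e.g.\ $K=2$, $B=3$, the extra observation in component $1$, comparing the $(2,2)$ and $(2,1)$ entries), so the per-allocation dominance claim does not hold uniformly; the conclusion survives only because the summed diagonal $u^{\{b\}}_{l,l}$ picks up a strictly slower-decaying contribution from the allocation concentrating the whole block on component $l$. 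Your exponent-minimization argument handles this correctly and makes the gap $e_c-e_a=\nu_0+n_{Z\backslash b}>0$ (using $n_{k\backslash b}>0$) explicit, so the remaining algebra $(ab-cd)/((a+c)(b+d))\to 1$ and the squeeze against $\rho_{\tilde{\pi}}({\pmb C}_{\{b\}})\leq 1$ via Theorem~\ref{theorem_general_KB} go through as you describe. The only cosmetic caveat is the collision of your symbol $d$ for $p_{21}^{\{b\}}$ with the scale parameter $d$.
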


Since the convergence rate of ${\pmb C}_{\{b\}}$ to $\tilde{\pi}^{C_{\{b \}}}$ is not bounded away from $1$, it puts into question the clustering characterization of these observations under sampling of any length.  
If lower bounds on the convergence rate are high, it may often be desirable to use a sampling scheme that converges faster to ${\bf {\tilde{\pi}}}^{C_{\{b \}}}$.  Here we propose a blocked Gibbs sampling procedure that improves the convergence rate and balances trade-offs between computational burden and convergence benefit.

\section{Blocked Gibbs sampling }
\label{sec:blocking}


A blocked Gibbs sampling approach to this problem identifies those groups of latent variables that exhibit significant correlation in the parameter space, in this case those observations close to one another and in the tails between component distributions, and samples them as a block.  So during the Gibbs sampling, we sample from this group jointly, conditional on the complementary set of latent variables, that is ${\pmb C}_{\{ b\}} | {\pmb C}_{\backslash \{ b\}}$, and then proceeds with standard Gibbs sampling, $C_{i} | {\pmb C}_{\backslash i}$ for those $i \notin \{b \}$.  Considering the partitioning of Equation (\ref{eqn:partition_space}), the small cost we pay in ${\pmb {\tilde{\pi}}}\neq {\pmb {\pi}}$, differing by a small $\epsilon$, is compensated for in convergence to ${\pmb {\tilde{\pi}}}$ faster than we could to ${\pmb \pi}$.

Using this procedure, $\rho_{\tilde{\pi}} ({\pmb C}_{\{ b \}})=0$ because we are sampling from what is ${\pmb C}_{\{b \}}$'s ``marginal'' distribution conditional on the assumed high probability allocation of the complementary set.   While the modified Gibbs sampling procedure is beneficial from the perspective of convergence speed, the trade-off is computational burden: for each sample from ${\pmb C}_{\{ b\}} | {\pmb C}_{\{\backslash b\}}$, the probability for $B^K$ possible moves must be calculated, in contrast to the $B\cdot K$ calculations total that are needed for $B$ consecutive moves under the standard Gibbs sampler.
In addition, while standard Gibbs sampling can use rank-one updates to $S_{\{k \backslash i\}}$ in calculating $S_{\{k \}}$ to ease the computational burden of determinant calculation, for $S_{\{k \backslash b\}}$ no such shortcut can be assumed in calculating the joint distribution for  ${\pmb C}_{\{b \}}$.  

A modification of the blocked Gibbs sampler must therefore be used to address these practical computational challenges.  To calculate the $K^B$ possible allocations of ${\pmb C}_{\{ b\}}$, we use a second-order approximation of the determinant, which is necessary because a first order approximation performs poorly in exactly those situations where it is needed--the presence of outliers significantly perturbs component sums of squares calculated under possible latent variable allocations.  After sampling from one of these allocations according to the approximation, we calculate an exact value for the proposed move and perform an accept-reject step as a function of their ratio so detailed balance holds.  The procedure can be summarized as follows:


\begin{enumerate}
\item[Step 1] Calculate $\hat{f}({\pmb C}^{(n+1)}_{\{b\}} | {\pmb C}^{(n)}_{\backslash \{ b \}} ,  \beta,  \, \theta_0, {\pmb Y})$, an approximation of $f({\pmb C}^{(n+1)}_{\{b\}} | {\pmb C}^{(n)}_{\backslash \{ b \}} ,  \beta,  \, \theta_0, {\pmb Y})$, the condition distribution of ${\pmb C}^{(n+1)}_{\{ b\}} $ given $ {\pmb C}^{(n)}_{\backslash \{ b\}}$ using a second order determinant approximation on the $K^B$ allocations for ${\pmb C}_{\{ b\}}$, that is, for a cached $|S_{\{ k\backslash b \}}|$ and perturbation $\epsilon \, Q_k$,  
$$ | S_{\{ k\backslash b \}} + \epsilon \, Q_k | = |S_{\{ k\backslash b \}}| \Bigl(1 + \epsilon \; tr(A_k) + \epsilon^2/2\cdot \bigl(tr(A_k)^2 - tr(A_k^2)\bigr) \Bigr)$$
where $A_k = Q_k S_{\{ k\backslash b \}}^{-1}$


\item[Step 2] Sample ${\pmb C}^{(n+1)}_{\{ b\}}$, a proposed move, from the multinomial distribution according to $\hat{f}({\pmb C}^{(n+1)}_{\{b\}} | {\pmb C}^{(n)}_{\backslash \{ b \}} ,  \beta,  \, \theta_0, {\pmb Y})$

\item[Step 3] Calculate the true value, $f({\pmb C}^{(n+1)}_{\{b\}} | {\pmb C}^{(n)}_{\backslash \{ b \}} ,  \beta,  \, \theta_0, {\pmb Y})$, and
define the ratio $$r_{C^*_{\{b \}}} = \frac{f({\pmb C}^{(n+1)}_{\{b\}} | {\pmb C}^{(n)}_{\backslash \{ b \}} ,  \beta,  \, \theta_0, {\pmb Y})}{\hat{f}({\pmb C}^{(n+1)}_{\{b\}} | {\pmb C}^{(n)}_{\backslash \{ b \}} ,  \beta,  \, \theta_0, {\pmb Y})}$$

\item[Step 4] Draw $u \sim \mbox{Unif}(0,1)$, the uniform distribution on $(0,1)$, and if $r_{C^*_{\{b \}}}>u$, accept the move ${\pmb C}^{(n)}_{\{b \}}\rightarrow {\pmb C}^{(n+1)}_{\{b \}}$, and otherwise return to Step 1

\end{enumerate}

\noindent
If it is not feasible to block sample all the $C_i$ in the outlier set, one can instead block smaller subsets such that their union comprise ${\pmb C}_{\{b \}}$.  While in that case it is difficult to provide convergence guarantees on the procedure, the rate to $\tilde{{\pmb \pi}}$ will necessarily improve as compared to non-blocked sampling by \citet{liu_collapsed_1994} Theorem 1.






\section{Synergism, partial Synergism, and Antagonism in the latent variable space}
\label{sec:synergism}
We have focused in the discussion above on the case where outliers are in some neighborhood of one another.  In this case, there is correlation of latent variables associated with all components into which the observations can be sampled.  One might call the phenomenon synergism.  One can consider that this behavior could also apply only to subsets of a mixture model's components depending on the observations considered, or partial synergism.  That is, latent variables could be correlated with respect to being sampled into one component, but they are independent or even antagonistic with respect to a complementary set of components.

In notation, we have been considering the case where for any $i,j\in \{ b\}$, $Cor(I(C_i=k),I(C_j=k))\geq 0$ on a common set of $k$ for which there is non-trivial probability for either $C_i$ or $C_j$ to be sampled into that component.  Whereas, one may consider alternatives where for some $k$, $Cor(I(C_i=k),I(C_j=k))\approx 0$ while $P(C_i=k)>0$ or $P(C_j=k)>0$, or also that possibility that $Cor(I(C_i=k),I(C_j=k))< 0$.  This synergistic or antagonistic behavior becomes a function of the spatial orientation of components (themselves a function of allocations of observations) with respect to subsets of outlier observations which may not be in proximity of one another.  One might consider two such orientations of two observations and three components in Figures \ref{fig:antag} and \ref{fig:partial}, one exhibiting antagonistic behavior with respect to latent variable allocation to component 2 (Figure \ref{fig:antag}), and one exhibiting partial synergistic behavior with respect to latent variable allocation to cluster 2 (Figure \ref{fig:partial}).  In either case, allocation of the two observations to clusters 1 and 3 separately would be nearly independent.  

\begin{figure}[H]
\begin{center}
\includegraphics[width=0.7\textwidth]{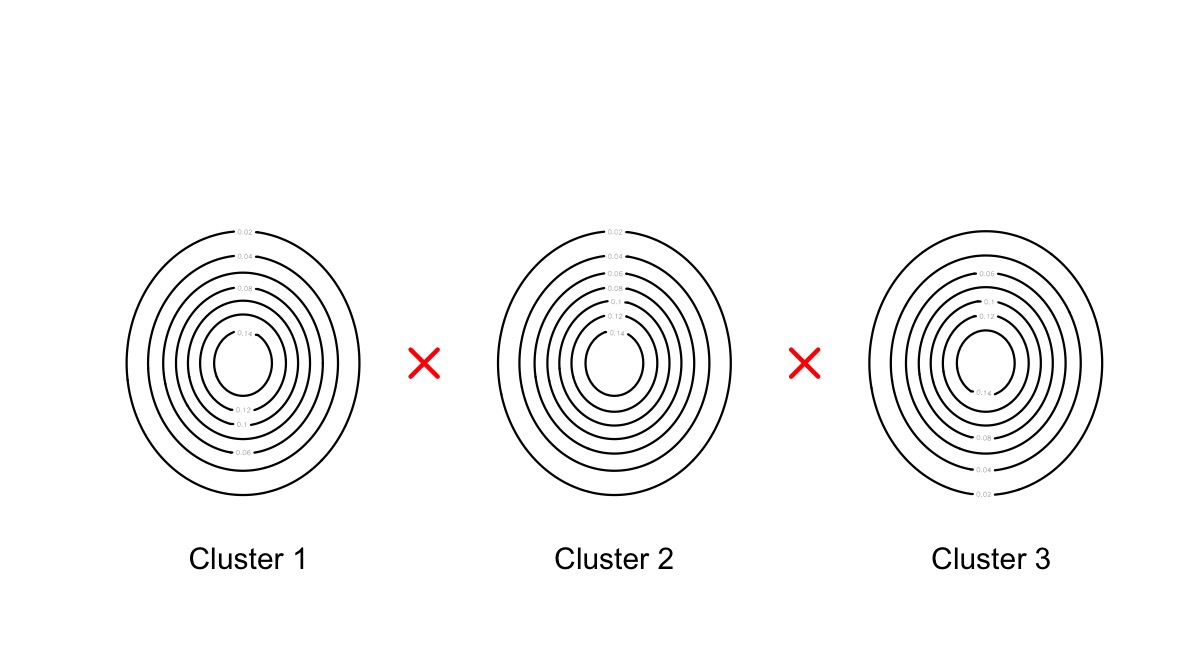}
\captionsetup{width=0.9\textwidth}
\caption{{\small Contour plot of a mixture density composed of three Gaussian clusters or components, with an observation each between the two pair of adjacent components, marked with red X's.  The latent variables of these observations would negatively correlate with respect to cluster 2 because they ``pull'' in different directions, but behave independently with respect to allocation to clusters 1 or 3, respectively.}}
\label{fig:antag}  
\end{center}
\end{figure}

\begin{figure}[H]
\begin{center}
\includegraphics[width=0.7\textwidth]{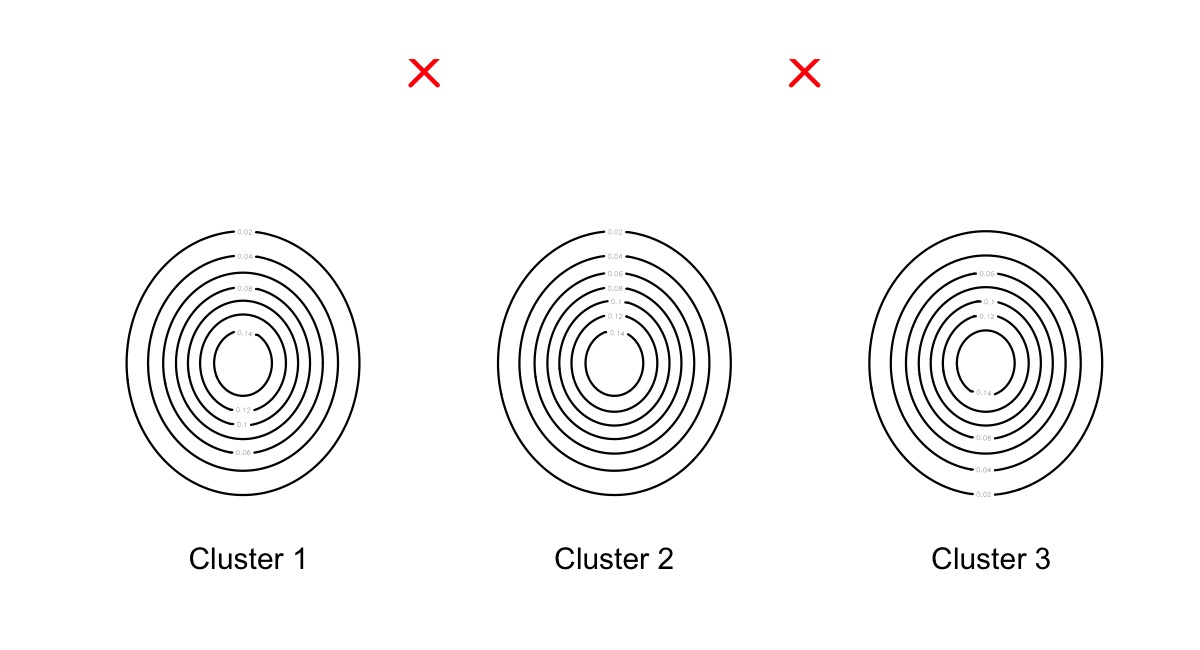}
\captionsetup{width=0.9\textwidth}
\caption{{\small Contour plot of a mixture density composed of three Gaussian clusters or components, with an observation each midway and above the two pair of adjacent components, marked with red X's.  The latent variables of these observations would correlate with respect to cluster 2, but behave independently with respect to allocation to clusters 1 or 3, respectively. }}
\label{fig:partial}  
\end{center}
\end{figure}

In these cases, one must again rely on notions of some $(1-\epsilon)$ size probability region of the posterior space of latent variables one is primarily interested in
or likewise clipped tails of component distributions.  Otherwise, correlation will be still more non-trivial for distant components since observations are only increasingly in their tails and are therefore well-leveraged.  

For this reason, studying notions of convergence in these settings even ignoring challenges with label switching
becomes difficult.
We do not develop strategies for improving sampling in these cases, but introduce and distinguish these dynamics from those we have focused on in this study.  We remark that the case or spatial orientation where observations are in tails of components and in a neighborhood of one another uniquely exhibits positive correlation of latent variables on a common set of components and so renders the case of strong interest to understand.  We examine other orientations in another manuscript.








\section{Simulation}
\label{sec:simulation}

We generated a mixture distribution of 160 observations with four components, each Gaussian in three dimensions, whose means were all equidistant from one another.  We placed three outlier observations equally between two or three of the four component means depending on the simulation and proceeded with Gibbs sampling the data using a four component collapsed Gaussian mixture model with non-informative hyperparameters: $S_0$, a rank 3 diagonal matrix of 2, $m_0$, a vector of 0's, $\kappa_0 = 0.005$, $\nu_0 = 0.02$, and $\beta =3$.  
The covariance matrices within components were diagonal with variances of 1.  The distance between the four component means was 11 
so that clustering was clearly present and identifiable.   

Since the focus of the study was comparison of blocked versus non-blocked Gibbs sampling, in one case we block sampled with size 3 while in the other case we sampled one observation at a time.
We sampled the outlier observations at regular intervals to assure a sufficient number of samples to assess performance of the sampler.  For the blocked sampling case, the interval was once every 20 observations (each of those 20 iterations consisting of a block sample of 3), and for the standard Gibbs sampler, every 60 (each iteration consisting of one new sample), at which point each of the three outliers was chosen consecutively.  In this way we achieved comparability of the two approaches since an identical number of latent variables were sampled between interval.  A total of 15000 iterations were computed for the blocked sampler, and 45000 for the standard Gibbs sampler so that the total number of samples was equal.  The last 4000 samples were examined for their clustering patterns, where those 4000 consisted of ``thinned'' samples in the standard Gibbs sampler case so that one of every three was taken.  In this way, the total number of elapsed moves was equivalent between the blocked and non-blocked regimes.  We fit posterior similarity matrices (PSMs) \citep{fritsch_improved_2009}, which map the estimated posterior probability that $P(C_i=C_j)$ to a color for every element $1\leq i,j\leq N$, and calculated autocorrelations from these chains.  The autocorrelation function (ACF) was fit on one sample of the outlier set which, because it only vacillated between the two components, could be considered binary.  

The PSMs for the scenario of outliers equidistant between two components for the blocked versus non-blocked alternatives are shown in Figures \ref{fig:psm2}a (non-blocked sampling) and \ref{fig:psm2}b (blocked sampling).  Both PSMs exhibit clearly identifiable clustering with respect to the four components indicating the samplers generally perform as expected.  One also observes the mutual clustering of the three outliers at the bottom of the PSMs, which in both figures cluster together as one might expect.  However, the clustering shown in Figure \ref{fig:psm2}b is more consistent with what intuition and the data would suggest--for these outliers equidistant between two components, their cluster membership should be equally distributed between those components, which is what we observe.  In contrast, in Figure \ref{fig:psm2}a, the three outliers are stuck, together, in one of the components to which they are closest, having significantly pulled its mean towards themselves because they are well-leveraged, making ``escape'' difficult one by one in the absence of a blocked move.  Their allocation into that component of the two is arbitrary; if one were to start the chain again, they would as likely become allocated to the other one.

\begin{figure}[H]
\centering
\begin{subfigure}{0.48\textwidth}
\includegraphics[width=\linewidth]{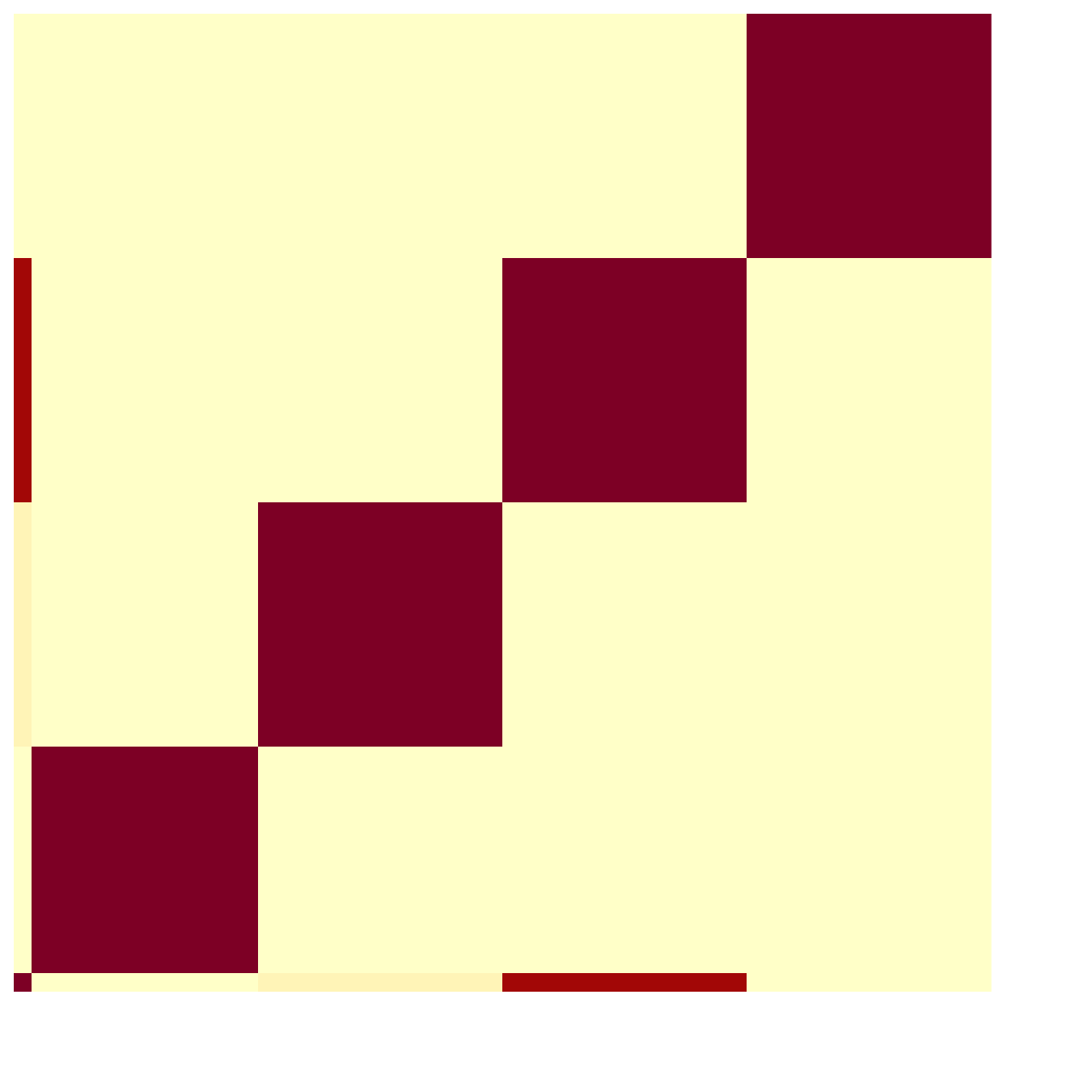}
\captionsetup{width=0.9\textwidth}
\caption{Non-blocked sampling PSM} 
\label{fig:diff1}
\end{subfigure}
\hspace*{0.25cm} 
\begin{subfigure}{0.48\textwidth}
\includegraphics[width=\linewidth]{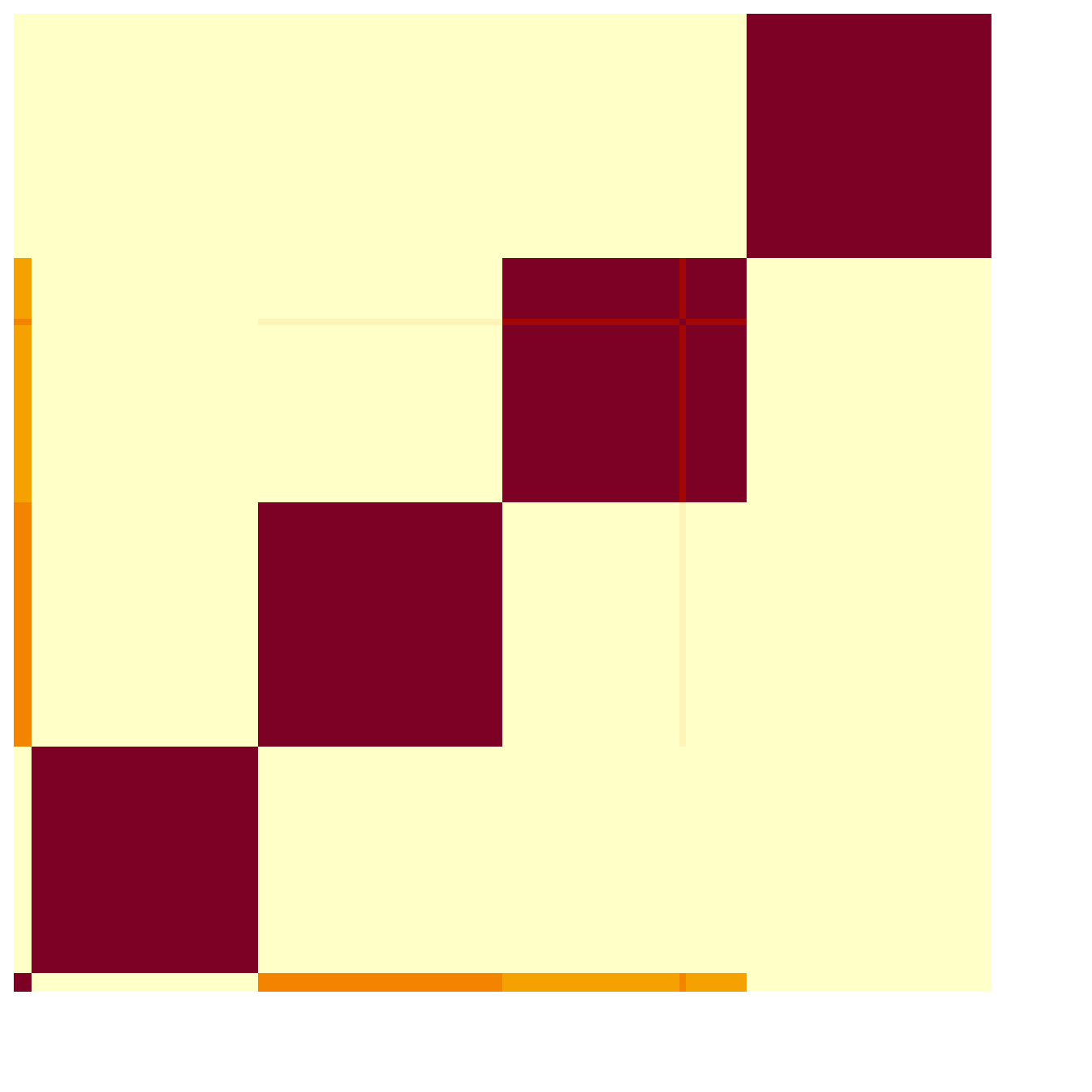}
\caption{Blocked sampling PSM} 
\label{fig:diff2}
\end{subfigure}
\captionsetup{width=0.90\textwidth}
\caption{{\small Posterior similarity matrices (PSMs) under the two different sampling regimes when the outlier block is equidistant between two components.  Observations are in the same order along the x- and y-axes, where red indicates higher values of estimated $P(C_i=C_j)$ for PSM entry $i,j$ in posterior samples, and yellow indicates lower values.}}
\label{fig:psm2}
\end{figure}
The estimated ACFs for the non-blocked (Figure \ref{fig:acf}a) and blocked (Figure \ref{fig:acf}b) regimes are consistent with clustering patterns of the PSMs--the high autocorrelation in the non-blocked sampler is consistent with outlier observations unable to escape their allocation to a component.  Conversely, the low autocorrelation in the blocked sampler is consistent with outliers' nearly equal distributions in the PSM between the two components to which they are closest.  Lastly, in Figures \ref{fig:psm3}a (non-blocked sampling) and \ref{fig:psm3}b (blocked sampling) we show PSMs for the setting where outliers are equidistant between three rather than two  components.  The patters are consistent with Figure \ref{fig:psm2}--for non-blocked sampling, outliers become stuck to one component, whereas for blocked sampling, the outliers are equi-distributed between the three components, as is more representative of the true nature of the data.






\begin{figure}[H]
\centering
\begin{subfigure}{0.48\textwidth}
\includegraphics[width=\linewidth]{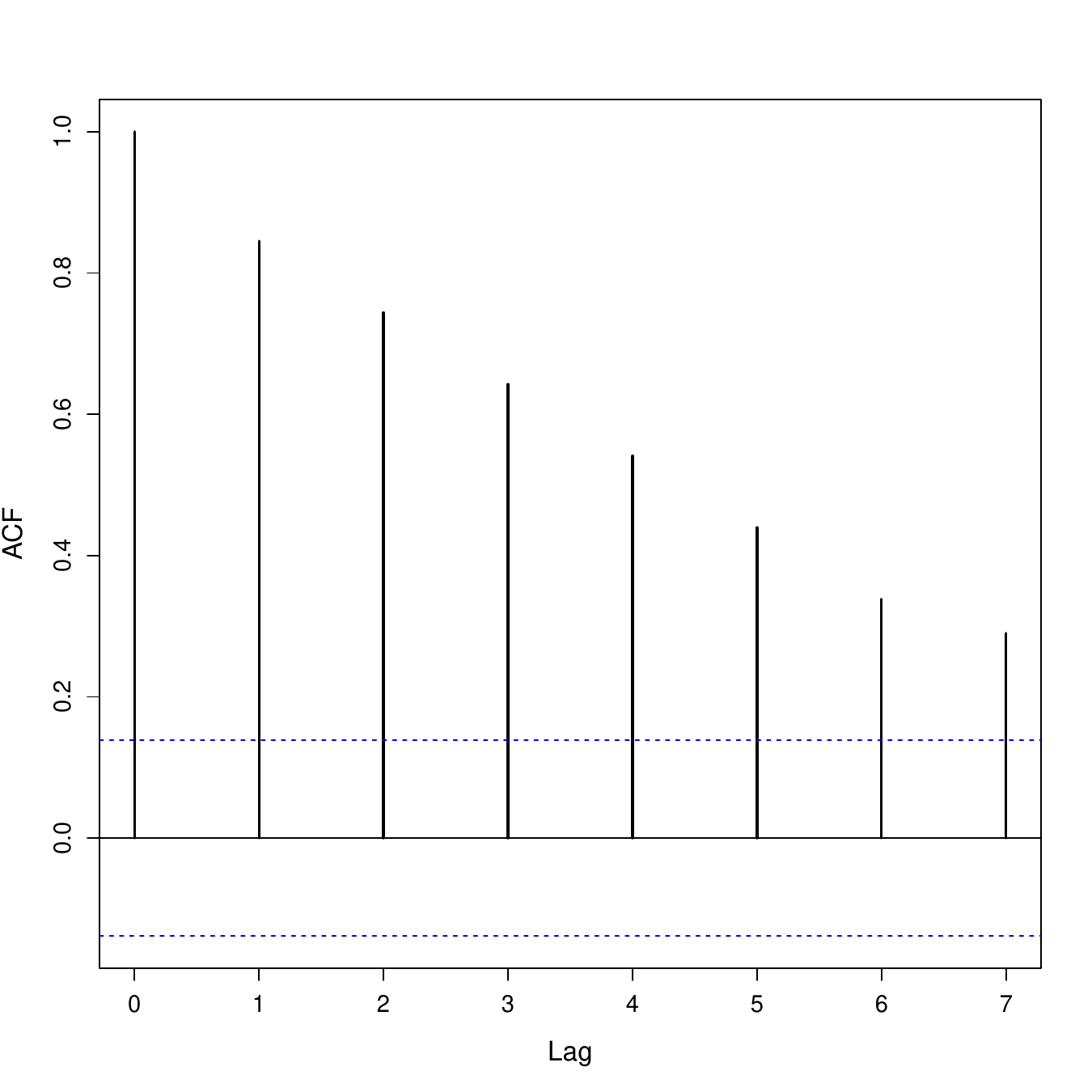}
\captionsetup{width=0.9\textwidth}
\caption{Non-blocked sampling ACF} 
\label{fig:diff1}
\end{subfigure}
\hspace*{0.25cm} 
\begin{subfigure}{0.48\textwidth}
\includegraphics[width=\linewidth]{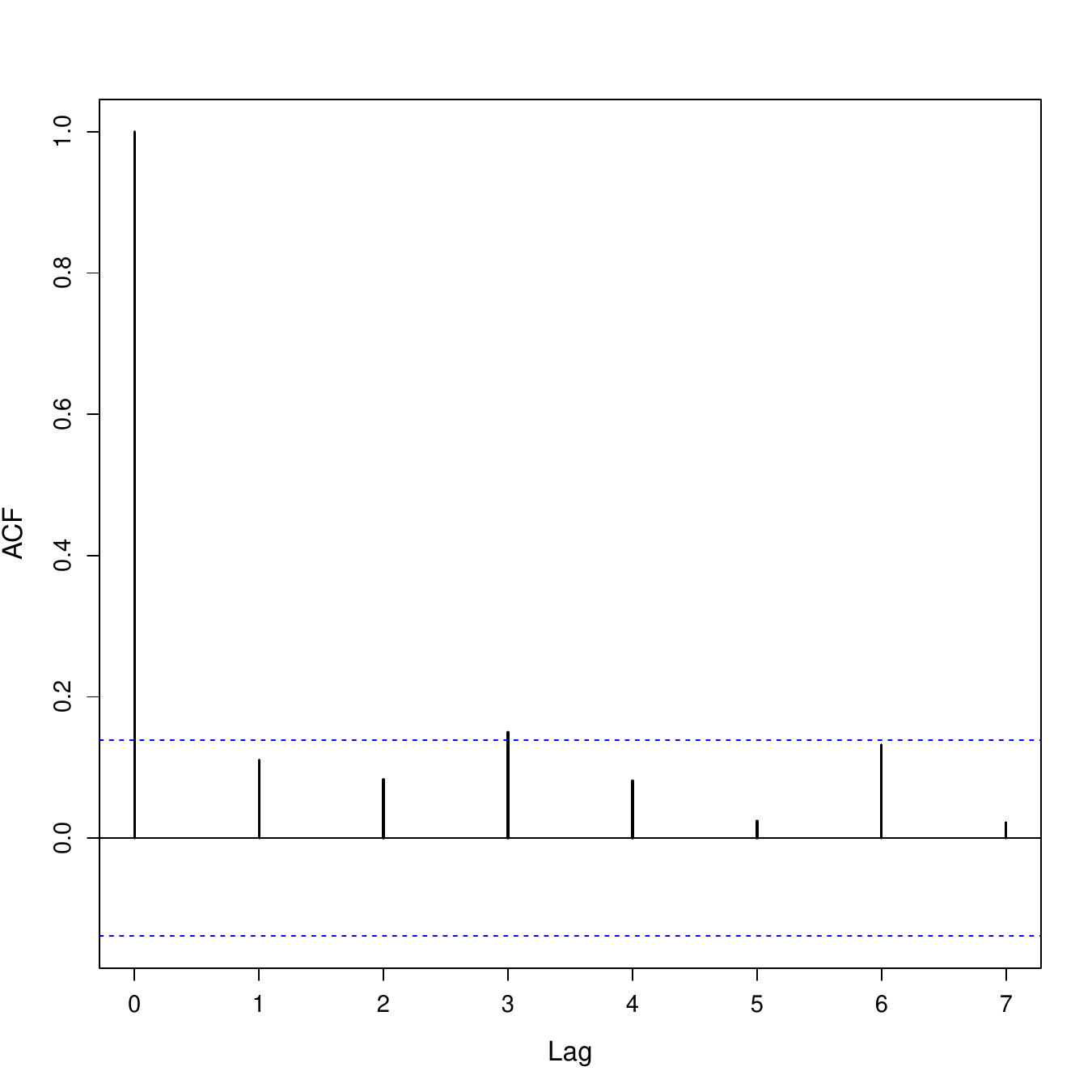}
\caption{Blocked sampling ACF} 
\label{fig:diff2}
\end{subfigure}
\captionsetup{width=0.9\textwidth}
\caption{{\small Estimated autocorrelation functions from posterior samples of a single outlier observation under the scenario where outliers are equidistant between the two components.}}
\label{fig:acf}
\end{figure}



\begin{figure}[H]
\centering
\begin{subfigure}{0.47\textwidth}
\includegraphics[width=\linewidth]{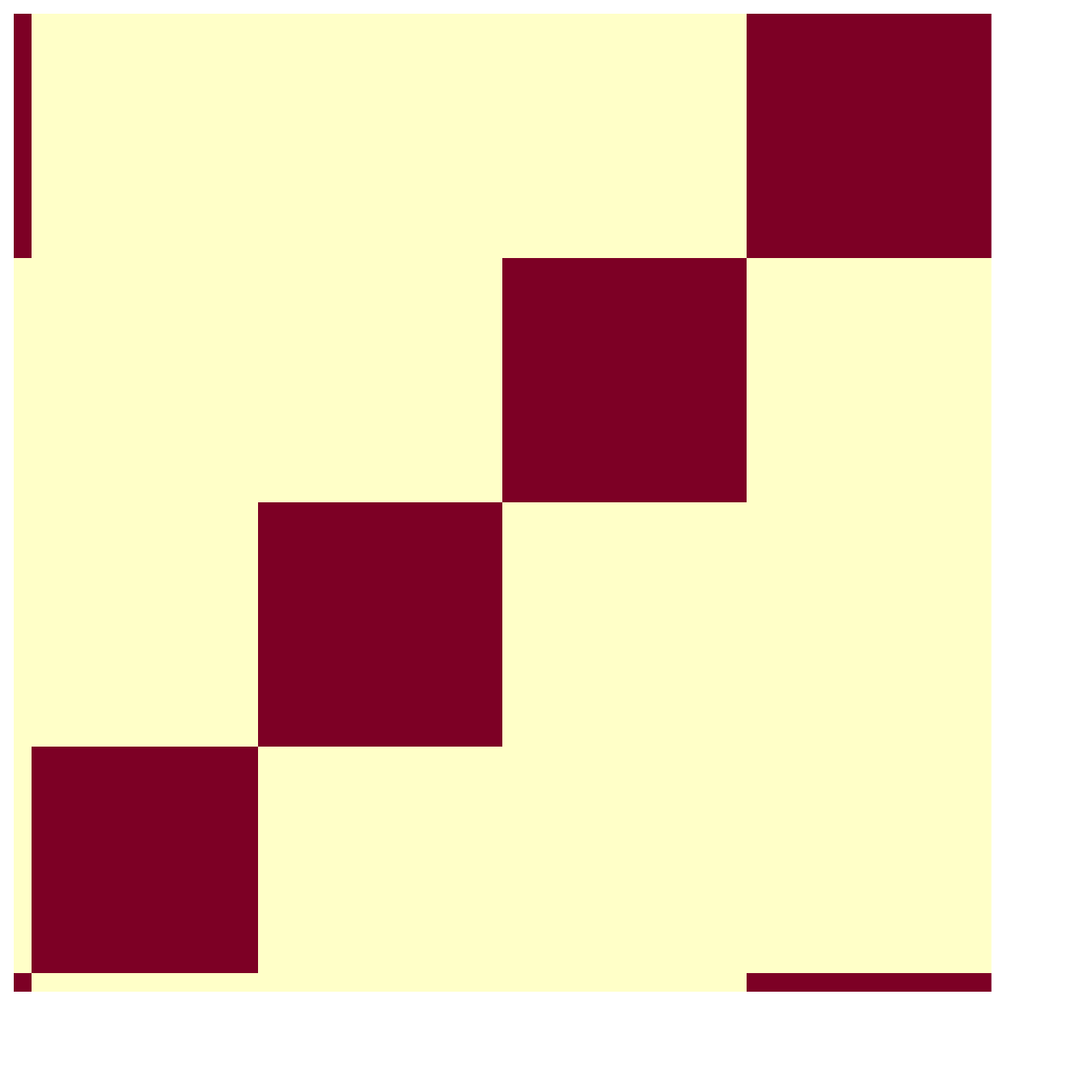}
\caption{Non-blocked sampling PSM} 
\label{fig:diff1}
\end{subfigure}
\hspace*{0.25cm} 
\begin{subfigure}{0.47\textwidth}
\includegraphics[width=\linewidth]{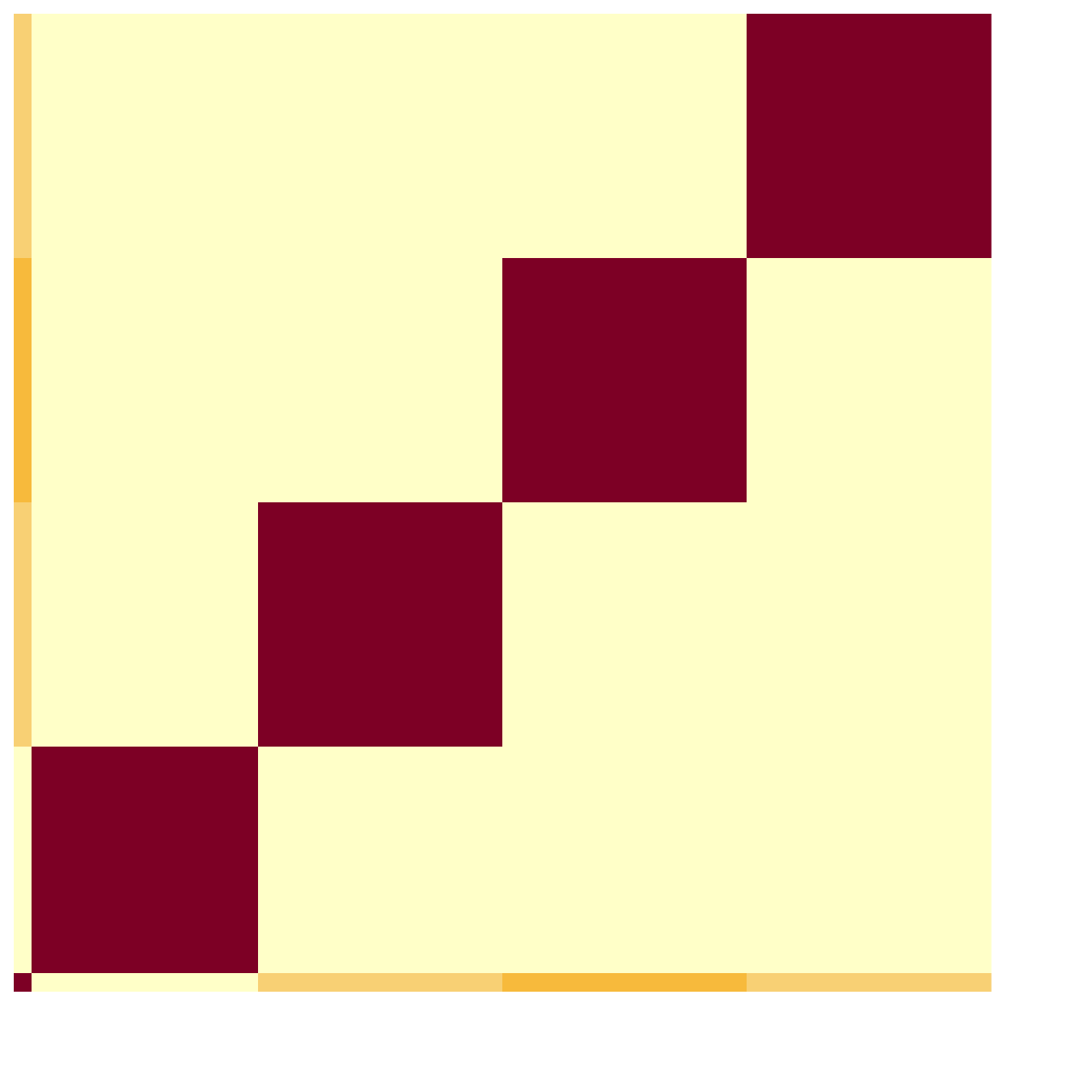}
\caption{Blocked sampling PSM} 
\label{fig:diff2}
\end{subfigure}
\captionsetup{width=0.9\textwidth}
\caption{{\small Posterior similarity matrices (PSMs) under the two different sampling regimes when the outlier block is equidistant between three components.  Observations are in the same order along the x- and y-axes, where red indicates higher values of estimated $P(C_i=C_j)$ for PSM entry $i,j$ in posterior samples, and yellow indicates lower values.}}
\label{fig:psm3}
\end{figure}











\section{Discussion}

We have described correlation structure in the space of latent variables for finite Gaussian mixture models.  That structure suggests that when observations are located approximately equidistant in the tails of component distributions, called the ``outlier set'', the respective latent variables exhibit conditional correlation that approaches 1 as the distances from those components increase in equal order.  This implies that standard, non-blocked Gibbs sampling procedures will converge arbitrarily slowly to an approximation of the outlier latent variables' stationary distributions.  That approximation was introduced because of the challenge of studying convergence to the true stationary distribution and is relevant because the approximation can become arbitrarily close to the true stationary distribution as a function of increasing clustering in the non-outlier set of the data.  From a practical perspective, a standard Gibbs sampling procedure results in outlier observations ``sticking'' in one or another component, unable to leave, so that posterior measures of clustering are not reflective of the data.  However, the characteristic is hidden by posterior diagnostics since most observations do not exhibit this behavior.  

We have shown that blocked Gibbs sampling outlier observations that reside between tails of component distributions addresses the slow convergence by ``moving correlation from the Gibbs sampler to the random number generation'' \citep{seewald_discussion_1992,roberts_updating_1997}.  The trade-off is computational complexity, and we have described a procedure that reduces computational burden by approximating expensive determinants and using an accept-reject step so that detailed balance holds.  

While it is difficult to study convergence rates with dynamic updating schemes or rigorously justify blocking rules composed of subsets of the deemed ``outlier set'' (that set a designation whose appropriateness may change to some degree as sampling proceeds and latent variable allocations are updated) due to computational difficulty, one way forward may be to first use standard, univariate Gibbs sampling to fit a finite Gaussian mixture model and investigate if clustering is evident in the data.  If there is, one can 1) identify a tentative set of outliers, 2) determine a computationally feasible block size, and then 3), since observations close to one another exhibit more correlation in the latent variable space other things held equal as exhibited in Figure \ref{fig:diff}, proceed with blocked sampling those observations in the outlier set closest to one another while retaining standard Gibbs sampling in the complementary set. 

As discussed in Section \ref{sec:synergism}, however, correlation in the latent variable space has a complicated structure, and sampling strategies which only block observations in close proximity do not encompass all scenarios where latent variables may exhibit significant correlation.  We described antagonist and partial synergistic structures, observations involved in which may also benefit from distinct blocking regimes.  

In applied data analysis settings, highly leveraged outliers between component distributions are difficult to categorize, but more or less important to understand well depending on the setting.  These observations may comprise types of cancer whose genomic patterns do not fall definitely in one or another category, and any coarse summarization of such an observation due to poor convergence in a mixture model may have negative consequences on its understanding.  While the optimality of one or another scheme is difficult to show, we have made progress in showing how blocked Gibbs sampling the outliers as described in this work will improve their convergence and therefore understanding in important applications such as these.  




\bibliographystyle{plainnat} 
\bibliography{blocked_gibbs_10_31.bib}

\clearpage

\begin{center}
\textbf{\large Supplementary Material}

\textbf{David Swanson}
\end{center}
\setcounter{equation}{1}
\setcounter{figure}{0}
\setcounter{section}{0}
\setcounter{table}{0}
\setcounter{page}{1}
\makeatletter
\renewcommand{\thetable}{S\arabic{table}}
\renewcommand{\thefigure}{S\arabic{figure}}
\renewcommand{\thesection}{S\arabic{section}}
\renewcommand{\bibnumfmt}[1]{[S#1]}
\renewcommand{\citenumfont}[1]{S#1}



\begin{proof}[{\bf Proof of Theorem \ref{theorem_cor}}]


Using invertability of $S_{{\{k \backslash b \}}} + I_{l=m}V_{\{ij\}}$ for $k=1,2$, we can write
$$\Gamma_{lm} \Bigl|S_{{\{k \backslash b \}}} + q_{lm}\cdot \frac{n_{k\backslash b}}{n_{k\backslash b}+q_{lm}} (d_k \,\overline{Y}_{\{ k \backslash b\}} -  \overline{Y}_{\{b_k\}})(d_k \, \overline{Y}_{\{ k \backslash b\}} -  \overline{Y}_{\{b_k\}})^T   + I_{l=m}V_{\{ ij \}}\Bigr|^{-(n_{k\backslash b}+q)/2}$$
$$ =\Gamma_{lm} \Bigl( \bigl| S_{k \backslash b }^V \bigr| (1 + \kappa_{lm} + d_k^2 \cdot  \frac{q_{lm}\cdot n_{k\backslash b}}{n_{k\backslash b}+q_{lm}} \, \overline{Y}_{\{k \backslash b\}}^T   (S_{k \backslash b }^{V})^{-1} \overline{Y}_{\{k \backslash b\}}) \Bigr)^{-(n_{k\backslash b}+q)/2}$$
\noindent because $(d_k \, \overline{Y}_{\{ k \backslash b\}} -  \overline{Y}_{\{b_k\}})(d_k \, \overline{Y}_{\{ k \backslash b\}} -  \overline{Y}_{\{b_k\}})^T$ is a rank 1 update, where we define $S_{k \backslash b }^V = S_{\{k\backslash b \}} + I_{l=m}V_{\{ij\}}$, and 
\begin{align}
\kappa_{lm}= \frac{q_{lm}\cdot n_{k\backslash b}}{n_{k\backslash b}+q_{lm}} \, \Bigl(  \overline{Y}_{\{b_k\}}^T  (S_{k \backslash b }^{V})^{-1} \overline{Y}_{\{b_k\}}  - 2 \, d_k \cdot   \overline{Y}_{\{k \backslash b\}}^T   (S_{k \backslash b }^{V})^{-1} \overline{Y}_{\{b_k\}} \Bigr) \nonumber \\ 
=o \Bigl( d_k^2 \cdot  \frac{q_{lm} \cdot n_{k\backslash b}}{n_{k\backslash b}+q_{lm}} \, \overline{Y}_{\{k \backslash b\}}^T  (S_{k \backslash b }^{V})^{-1} \overline{Y}_{\{k \backslash b\}}) \Bigr)\label{eqn:little_o_cor}
\end{align}
where $q_{lm}=2$ for $l=m$ (ie, in $p_{11}$ and $p_{22}$) and $q_{lm}=1$ for $l\neq m$ (ie, in $p_{12}$ and $p_{21}$) by their definitions, and where $\overline{Y}_{\{b_k\}}$, the mean of the $Y_i's$ in the blocking set allocated to component $k$, refers to $Y_i$, $Y_j$, or their mean, $\overline{Y}_{\{ ij \}}$, depending on $k$ and whether used in the expression for $p_{11}$, $p_{12}$, $p_{21}$, or $p_{22}$. Additionally, $I_{l=m}$ is the indicator function for $l=m$, indices for $p_{lm}$, so the $V_{\{ ij \}}$ will not be present in the $p_{12}$ and $p_{21}$ expressions because $Y_i$ and $Y_j$ are split between the two components in their allocation and so have no variation attributable to them within a component. Notice that since the expressions are sums of squares, non-singular by assumption, they are positive and increasingly so as $d_k$ gets large.  
We Because $\kappa_{lm}$ is of order $d_k$, we conclude Equation (\ref{eqn:little_o_cor}) and that the dominant term for $p_{11}/\gamma$ is
\begin{align*}
\Gamma_{11} \, | S_{{\{2 \backslash b \}}} |^{\frac{-n_{2\backslash b}}{2}}    \Bigl( \Bigl| S_{{\{1 \backslash b \}}} +  & V_{\{ ij\}} \Bigr|\cdot \\
( d_1^2 \cdot & \frac{2\cdot n_{1\backslash b}}{n_{1\backslash b}+2}  \overline{Y}_{\{1 \backslash b\}}^T   \bigl(S_{{\{1 \backslash b \}}} + V_{\{ ij\}} \bigr)^{-1} \overline{Y}_{\{1 \backslash b\}}) \Bigr)^{\frac{-n_{1\backslash b}-2}{2}}
\end{align*}
and analogously for $p_{22}/\gamma$.  The dominant terms for $p_{12}/\gamma$ and $p_{21}/\gamma$ are $$\Gamma_{lm} \Bigl( | S_{{\{2 \backslash b \}}} | ( d_2^2 \cdot  \frac{1\cdot n_{2\backslash b}}{n_{2\backslash b}+1}  \overline{Y}_{\{2 \backslash b\}}^T   S_{{\{2 \backslash b \}}}^{-1} \overline{Y}_{\{2 \backslash b\}})  \Bigr)^{-(n_{2\backslash b}+1)/2} \;\;\;\;\;\;\;  $$  
$$ \;\;\;\;\;\;\; \cdot \Bigl( | S_{{\{1 \backslash b \}}} | ( d_1^2 \cdot   \frac{1\cdot n_{1\backslash b}}{n_{1\backslash b}+1}  \overline{Y}_{\{1 \backslash b\}}^T   S_{{\{1 \backslash b \}}}^{-1} \overline{Y}_{\{1 \backslash b \}}) \Bigr)^{-(n_{1\backslash b}+1)/2}$$
for $l,m\in \{1,2\}$, $l\neq m$, as appropriate.  Then, for increasing $d_1$ and $d_2$, because $d_1 = {\mathcal O}(d_2)$ and $n_{k\backslash b}>0$ for all $k$, we see $p_{12}/\gamma$ and $p_{21}/\gamma$ are $o(p_{11}/\gamma)$, so that $p_{12}=o(p_{11})$ and $p_{21}=o(p_{11})$.

Assume without loss of generality that $n_{1\backslash b}\geq n_{2\backslash b}$.  Then $p_{11} = {\mathcal O}(p_{22})$ or $p_{11} = o(p_{22})$ depending on if equality holds or not, respectively, because of $-n_{k\backslash b}$ in the exponent of $d_k$ with $k=1,2$ in the expressions for $p_{11}$ and $p_{22}$ where $d_1 = {\mathcal O}(d_2)$ by assumption.  Regardless, since $Cor(C_i,C_j) = (p_{11} p_{22}/\gamma^2 - p_{21} p_{12}/\gamma^2) / \bigl( (p_{11}/\gamma + p_{12}/\gamma^2) (p_{22}/\gamma^2 + p_{21}/\gamma^2)\bigr)$, the dominant terms will be $p_{11} p_{22}/\gamma^2$ in numerator and denominator, yielding 1 and giving the result. 
\end{proof}

\begin{proof}[{\bf Proof of Corollary \ref{corollary_cor}}]
Suppose $d_1$ is fixed with $d_2 \rightarrow \infty$ without loss of generality. Define $$\epsilon_{12} = \kappa_{12} \cdot \Bigl( | S_{{\{2 \backslash b \}}} | ( d_2^2 \cdot   \frac{1\cdot n_{2\backslash b}}{n_{2\backslash b}+1}  \overline{Y}_{\{1 \backslash b \}}^T   S_{{\{2 \backslash b \}}}^{-1} \overline{Y}_{\{1 \backslash b\}}) \Bigr)^{-(n_{2\backslash b}+1)/2}$$

$$\epsilon_{21} = \kappa_{21} \cdot \Bigl( | S_{{\{2 \backslash b \}}} | ( d_2^2 \cdot   \frac{1\cdot n_{2\backslash b}}{n_{2\backslash b}+1}  \overline{Y}_{\{2 \backslash b\}}^T   S_{{\{2 \backslash b \}}}^{-1} \overline{Y}_{\{2\backslash b\}}) \Bigr)^{-(n_{2\backslash b}+1)/2}$$

$$\epsilon_{22} = \kappa_{22} \cdot \Bigl( | S^V_{{2 \backslash b }} | ( d_2^2 \cdot   \frac{1\cdot n_{2\backslash b}}{n_{2\backslash b}+1}  \overline{Y}_{\{2 \backslash b\}}^T   \bigl(S^V_{2 \backslash b }\bigr)^{-1} \overline{Y}_{\{2\backslash b\}}) \Bigr)^{-(n_{2\backslash b}+2)/2}$$

\noindent
where we recycle the $\kappa_{lm}$ notation, which here stands for an expression-specific scaling, rather than additive, constant and is a function of $\Gamma_{lm}$, the allocation of the observation associated with the constant $d_1$ and the normalizing constant $\gamma$.  Then by the definitions of $p_{12}$, $p_{21}$, and $p_{22}$, 
$ p_{12}=\epsilon_{12} + o(\epsilon_{12})={\mathcal O} \bigl(d_2^{-n_{2\backslash b} -1}\bigr)$, $p_{21}=\epsilon_{21} + o(\epsilon_{21})={\mathcal O} \bigl(d_2^{-n_{2\backslash b} -1}\bigr)$ , while $p_{22}=\epsilon_{22} + o(\epsilon_{22})={\mathcal O} \bigl(d_2^{-n_{2\backslash b} -2}\bigr)=o(\epsilon_{12})$,
and where $\epsilon_{12}={\mathcal O}(\epsilon_{21})$.  We arbitrarily take $o(\epsilon_{12})$ as the representative lower order term in what follows.  Since $p_{11}+ p_{12}+ p_{21}+ p_{22} = 1$,
$$p_{11}=1-\epsilon_{12}-\epsilon_{21}- o(\epsilon_{12})$$ 
So we have $p_{11} + p_{21}=p_{\cdot 1}=1-\epsilon_{12} - o(\epsilon_{12})$ and $p_{11} + p_{12}=p_{1 \cdot }=1-\epsilon_{21} - o(\epsilon_{12})$, so that as $\epsilon_{12}$ and $\epsilon_{21}$ become small, $p_{\cdot 1} \cdot p_{1 \cdot} = 1-\epsilon_{12}-\epsilon_{21}-o(\epsilon_{12})$.
The numerator for the correlation calculation $Cor(C_i,C_j)$ is $p_{11} - p_{\cdot 1} \cdot p_{1 \cdot} = o(\epsilon_{12})$. But since the denominator for the calculation is $\sqrt{p_{\cdot 1} (1-p_{\cdot 1})p_{\cdot 2} (1-p_{\cdot 2})}={\mathcal O}(\epsilon_{12})$ so $Cor(C_i,C_j) = o(\epsilon_{12})/{\mathcal O}(\epsilon_{12}) \rightarrow 0$ as $d_2$ gets large.  

\end{proof}

\begin{proof}[{\bf Proof of Lemma \ref{lemma_invariant}}]
In \citet{liu_covariance_1994}, the authors show in Theorem 3.2 that the operator norm restricted to the space of mean 0 square integrable functions 
associated with the transition matrix for the move $C_{\{ b\}}^{(n)} \rightarrow C_{\{ b\}}^{(n+1)}$ using the ascending indices in Gibbs sampling is $\sup_{g,h} Cor \bigl(g(C_i), h(C_j ) \bigr)$ where $g \mbox{ and }h$ are functions on ${\mathbb R}$ under which 
$C_i$ and $C_j$ are square integrable.  By their Lemma 2.1 (found in \citet{yosida_functional_2012}), the transition matrix associated with the move using descending indices has an identical norm. The product of these transition matrices is self-adjoint.  Since the operator norm of the product of adjoint operators is the product of the norms, we have it equal to $\sup_{g,h} Cor \bigl(g(C_i), h(C_j ) \bigr)^2$ \citep{conway_course_2019}.  Since the operator norm of a self-adjoint operator is equal to its spectral radius, the convergence rate, that radius is $\sup_{g,h} Cor \bigl(g(C_i), h(C_j ) \bigr)^2$.  

Since $C_i$ and $C_j$ are binary, all functions on them can be expressed as linear ones.  Since correlation is invariant under linear transformation up to sign, $Cor \bigl(g(C_i), h(C_j )^2$ is constant for all $g,h$, yielding the equality $\sup_{g,h} Cor \bigl(g(C_i), h(C_j ) \bigr)^2 = Cor(C_i, C_j )^2$ and giving the result.
\end{proof}

\begin{proof}[{\bf Proof of Theorem \ref{theorem_main}.}]


Using similar reasoning to the proof of Theorem \ref{theorem_cor},
for any element $[{\textbf U}_{C_{\{b \}\backslash ij}}]_{l,l}$ along the diagonal of ${\textbf U}_{C_{\{b \}\backslash ij}}$ there is one term in the $K$ term product involving any $d_k$, $k\in \{1, \dots , K \}$, and that is $d_l$, the dominant portion of which is 

$$  \Bigl( | S_{{\{l \backslash b \}}} + V_{\{ ij \}} | ( d_l^2 \cdot  \frac{2\cdot n_{l\backslash b}}{n_{l\backslash b}+2}  \overline{Y}_{\{l \backslash b\}}^T   \bigl(S_{{\{l \backslash b \}}} + V_{\{ ij \}}\bigr)^{-1} \overline{Y}_{\{l \backslash b\}}) \Bigr)^{-(n_{l\backslash b}+2)/2}$$
For any term $[{\textbf U}_{C_{\{b \}\backslash ij}}]_{l,m}$ with $l\neq m$ there are two terms in the $K$ term product involving any $d_k$, $k\in \{1, \dots , K \}$, and those are $d_l$ and $d_m$, the dominant portions of which are:

$$ \Bigl( | S_{{\{l \backslash b \}}} | ( d_l^2 \cdot  \frac{1\cdot n_{l\backslash b}}{n_{l\backslash b}+1}  \overline{Y}_{\{l \backslash b\}}^T   S_{{\{l \backslash b \}}}^{-1} \overline{Y}_{\{l \backslash b\}})  \Bigr)^{-(n_{l\backslash b}+1)/2} \;\;\;\;\;\;\;  $$  
$$ \;\;\;\;\;\;\; \cdot \Bigl( | S_{{\{m \backslash b \}}} | ( d_m^2 \cdot   \frac{1\cdot n_{m\backslash b}}{n_{m\backslash b}+1}  \overline{Y}_{\{m \backslash b\}}^T   S_{{\{m \backslash b \}}}^{-1} \overline{Y}_{\{m \backslash b \}}) \Bigr)^{-(n_{m\backslash b}+1)/2}$$
Because $n_{k\backslash b}>0$ for all $k$ by assumption, and $K$ is fixed, we have that 

\begin{align}
\sum_{\{m:m\neq l \}} [{\textbf U}_{C_{\{b \}\backslash ij}}]_{l,m}= o([{\textbf U}_{C_{\{b \}\backslash ij}}]_{l,l} )
\label{eqn:diag_dom}
\end{align}
which holds for all $l\in \{1, \dots  , K \}$ and all $C_{\{b \}\backslash ij} \in {\mathcal C}_{\{b \}\backslash ij}$.  Since 
\begin{align}
\sum_{C_{\{b \}\backslash ij} \in {\mathcal C}_{\{b \}\backslash ij}} [{\textbf U}_{C_{\{b \}\backslash ij}}]_{l,m} = u^{\{b\}}_{l,m}
\end{align}
and set ${\mathcal C}_{\{b \}\backslash ij}$ is finite,

\begin{align}
\sum_{\{m:m\neq l \}} u^{\{b\}}_{l,m}= o(u^{\{b\}}_{l,l} ) 
\end{align}
which holds for all $l\in \{1, \dots  , K \}$.


If we sum elements of ${\textbf U}_{C_{\{b \}\backslash ij}}$ according to the partition induced by application of $I(\cdot \leq k')$ to $C_i$ and $C_j$, defining $C_i^{k'} = I(C_i \leq k')$ and $C_j^{k'} = I(C_j \leq k')$, we can calculate



$$Cor( C_i^{k'}, C_j^{k'} ) = \frac{p_{11}^{\{b\}} p_{22}^{\{b\}} - p_{12}^{\{b\}} p_{21}^{\{b\}}}{\Bigl(p_{11}^{\{b\}} +p_{21}^{\{b\}}\Bigr)\Bigl(p_{22}^{\{b\}} + p_{12}^{\{b\}}\Bigr)}$$
where



$$p_{11}^{\{b\}} = \gamma \sum_{l\leq k', \\m\leq k'} u^{\{b\}}_{l,m} \;\;\;\;\;\;\; p_{22}^{\{b\}} =  \gamma \sum_{l> k', \\m> k'} u^{\{b\}}_{l,m}$$

$$p_{21}^{\{b\}} =  \gamma \sum_{l> k', \\ m\leq k'} u^{\{b\}}_{l,m}  \;\;\;\;\;\;\;  p_{12}^{\{b\}} =  \gamma \sum_{l\leq k', \\m> k'} u^{\{b\}}_{l,m} $$

\noindent
But by Equation (\ref{eqn:diag_dom}), $p_{21}^{C_{\{b \}\backslash ij}}=o(p_{11}^{C_{\{b \}\backslash ij}})$ and $p_{21}^{C_{\{b \}\backslash ij}}=o(p_{22}^{C_{\{b \}\backslash ij}})$

$p_{21}^{\{b\}}=o(p_{11}^{\{b\}})$ and $p_{21}^{\{b\}}=o(p_{22}^{\{b\}})$

and equivalently for $p_{12}^{C_{\{b \}\backslash ij}}$ because ${\textbf U}_{C_{\{b \}\backslash ij}}$ is symmetric.  So the dominant products in the numerator and denominator are both $p_{11}^{C_{\{b \}\backslash ij}} p_{22}^{C_{\{b \}\backslash ij}}$, yielding 1 as the $d_k \rightarrow \infty$.  Since this holds for all ${\pmb C}_{\{b \}\backslash ij} \in {\mathcal C}_{\{b \}\backslash ij} $ 
%
%
\begin{align*}
\Biggl(\frac{p_{11}^{C_{\{b \}\backslash ij}} p_{22}^{C_{\{b \}\backslash ij}} -  p_{12}^{C_{\{b \}\backslash ij}}  p_{21}^{C_{\{b \}\backslash ij}}}{\bigl(p_{11}^{C_{\{b \}\backslash ij}} +p_{21}^{C_{\{b \}\backslash ij}}\bigr)\bigl(p_{22}^{C_{\{b \}\backslash ij}} + p_{12}^{C_{\{b \}\backslash ij}}\bigr)} \Biggr)^2 \leq  & \nonumber \\
\Biggl(\frac{(p_{11}^{\{b \}} - p_{1\cdot}^{\{b \}}p_{\cdot 1}^{\{b \}}  )}{((p_{11}^{\{b \}} + p_{12}^{\{b \}})(p_{21}^{\{b \}} + p_{22}^{\{b \}})) }\Biggr)^2 \leq  & \rho_{\tilde{\pi}} ({\pmb C}_{\{ij\}}) \leq  \rho_{\tilde{\pi}} ({\pmb C}_{\{b \}}) 
\end{align*}
%
%
giving $\rho_{\tilde{\pi}} ({\pmb C}_{\{b \}}) \rightarrow 1$.
\end{proof}

\end{document}